\documentclass[11pt]{amsart}
\usepackage{amssymb}
\usepackage{multirow}
\usepackage{mathtools}
\usepackage{hyperref}
\hypersetup{breaklinks=true}
\usepackage{breakurl}
\usepackage{enumitem}
\usepackage{tikz-cd}
\usepackage{mathrsfs}

\newcommand{\C}{\mathbb{C}}

\newcommand{\Q}{\mathbb{Q}}

\newcommand{\Z}{\mathbb{Z}}

\newcommand{\dec}{\mathrm{dec}}
\newcommand{\Adec}{\mathcal{A}_g^{\mathrm{dec}}[n]}
\newcommand{\Adeclam}{\mathcal{A}_{g,\lambda}^{\mathrm{dec}}[n]}
\newcommand{\Amdec}{\mathcal{A}_g^{\mathrm{mdec}}[n]}

\DeclareMathOperator{\End}{End}

\DeclareMathOperator{\Hom}{Hom}

\usepackage[margin=1in]{geometry}

\newtheorem{thm}{Theorem}[section]
\newtheorem{cor}[thm]{Corollary}
\newtheorem{prop}[thm]{Proposition}
\newtheorem{lem}[thm]{Lemma}

\theoremstyle{definition}
\newtheorem{defn}[thm]{Definition}

\newtheorem{example}[thm]{Example}

\theoremstyle{remark}
\newtheorem{rem}[thm]{Remark}

 \newtheoremstyle{TheoremNum}
        {7pt}{7pt}              
        {\itshape}                      
        {}                              
        {\bfseries}                     
        {.}                             
        { }                             
        {\thmname{#1}\thmnote{ \bfseries #3}}
    \theoremstyle{TheoremNum}

     \newtheoremstyle{TheoremNum}
        {7pt}{7pt}              
        {\itshape}                      
        {}                              
        {\bfseries}                     
        {.}                             
        { }                             
        {\thmname{#1}\thmnote{ \bfseries #3}}
    \theoremstyle{TheoremNum}
    \newtheorem{corn}{Corollary}

\begin{document}

\title{Complete Families of Indecomposable Non-simple Abelian Varieties}
\author{Laure Flapan}
\address{Department of Mathematics, Michigan State University, East Lansing, MI 48824}
\email{flapanla@msu.edu}

\subjclass[2010]{14D05, 14D07, 14C30, 11G15, 14H10}
\keywords{monodromy group, variation of Hodge structures, abelian variety, Kodaira fibration }

\begin{abstract}
Given a fixed product of non-isogenous abelian varieties at least one of which is general, we show how to construct complete families of indecomposable abelian varieties whose very general fiber is isogenous to the given product and whose connected monodromy group is a product of symplectic groups or is a unitary group. As a consequence, we show how to realize any product of symplectic groups of total rank $g$ as the connected monodromy group of a complete family of $g'$-dimensional abelian varieties for any $g'\ge g$. These methods also yield a construction of a new Kodaira fibration with fiber genus $4$. 

\end{abstract}
\maketitle

\maketitle


\section{Introduction}\label{intro}
If $C$ is a smooth algebraic curve, then its Jacobian $J(C)$ is indecomposable as a principally polarized abelian variety, meaning that $J(C)$ cannot be written as the product of smaller-dimensional principally polarized abelian varieties. However, if $\tilde{C}$ is another smooth algebraic curve equipped with a finite map $\tilde{C}\rightarrow C$, then although $J(\tilde{C})$ must also be indecomposable, the Jacobian $J(\tilde{C})$ is not simple as an abelian variety because $J(\tilde{C})$ is isogenous to a product of smaller-dimensional principally polarized abelian varieties, one of which is $J(C)$. 

In this paper, we investigate the phenomenon of indecomposable yet non-simple abelian varieties in families in the context of studying monodromy. For any family $f\colon \mathscr{A}\rightarrow B$ of complex abelian varieties, where $B$ is an algebraic variety and $A_b$ is a very general fiber, the \emph{connected monodromy group} $T(\mathscr{A})$ is the connected component of the identity of the $\Q$-Zariski closure of the image of the monodromy representation $\Phi\colon \pi_1(B)\rightarrow GL(H^1(A_b,\Q))$. Our interest is thus in studying complete families of indecomposable non-simple abelian varieties with specified monodromy. We prove:

\begin{thm}\label{main theorem}
Consider any fixed product of pairwise non-isogenous abelian varieties $A_{c,1}\times \cdots \times A_{c,r}\times A_{v,1}\times \cdots \times A_{v,s}$ such that each $A_{v,j}$ is general in the moduli space $\mathcal{A}_{g_{v,j}}$ and all $g_{v,j}\ge 2$. Then for any positive integer $d\le \min_j g_{v,j}-1$, there exist infinitely many complete families  $f\colon \mathscr{A}\rightarrow B$ of maximal variation of $g$-dimensional principally polarized abelian varieties with the properties:
\begin{enumerate}
\item The base $B$ is a smooth projective variety of dimension $d$
\item Every fiber of $f$ is indecomposable.
\item If $A$ is a very general fiber of $f$, then $A$ is isogenous to the product 
\[A_{c,1}\times \cdots \times A_{c,r}\times A_{v,1}\times \cdots \times A_{v,s}.\]
\item The connected monodromy group of $f\colon \mathscr{A}\rightarrow B$ is 
\[T(\mathscr{A})=\prod_{j=1}^{s}Sp_\Q(V_{v,j}),\]
where $V_{v,j}=H^1(A_{v,j},\Q)$ for each $1\le j\le s$.
\end{enumerate}
\end{thm}

The families $f\colon \mathscr{A}\rightarrow B$ being of  ``maximal variation'' means simply that the resulting moduli map $B\rightarrow \mathcal{A}_g$ is generically finite. For the general definition of the variation of a family see \cite{viehweg}.

One motivation for this result comes from the study of the possible fundamental groups of a smooth projective variety, or relatedly, the possible monodromy representations of a family of smooth projective varieties.  Simpson shows in \cite{simpson} that any representation of the fundamental group of a smooth projective variety into a reductive group can be deformed to one coming from a variation of Hodge structures. Recently, Arapura has shown  \cite[Section 2]{arapura} that if $f\colon X\rightarrow Y$ is a smooth projective morphism of smooth projective varieties with fiber $F$ and $\rho$ is the corresponding monodromy representation, then for every choice of normal subgroup $N$ of $\pi_1(F)$ and character $\chi$ of the quotient, the representation $\rho$ gives rise to a representation $\rho^{N, \chi}$ which is the monodromy representation of a polarizable variation of $\Q$-Hodge structures of type $\{(-1,0), (0,-1)\}$ over some $\tilde{Y}$ such that there is a surjective generically finite morphism of smooth varieties $p\colon \tilde{Y}\rightarrow Y$.  

Using  the equivalence between polarizable $\Q$-Hodge structures of type $\{(-1,0), (0,-1)\}$ and complex abelian varieties up to isogeny, it is thus natural to consider monodromy representations arising from families of abelian varieties and ask what groups can arise as their connected monodromy groups. With this motivation in mind, Theorem \ref{main theorem} yields the following consequence:

\begin{cor}\label{main group cor}
For any positive integer $d\le \min_i g_i-1$, where $g_i\ge 2$ for all $i$, and any $g'\ge \sum_{i=1}^sg_i$, the group  
\[T=\prod_{j=1}^sSp(2g_i,\Q)\]
arises as the connected monodromy group of a $d$-dimensional family of maximal variation of  indecomposable $g'$-dimensional abelian varieties over a smooth projective base.
\end{cor}

\subsection{Kodaira fibrations}\label{intro kod section}
The motivation for focusing on families whose fibers are indecomposable non-simple abelian varieties in all of the above results comes from questions about the monodromy of Kodaira fibrations. Note that by repeated use of the Lefschetz hyperplane theorem,  any group which is the fundamental group of a smooth projective variety is in fact the fundamental group of an algebraic surface (and in fact of a surface of general type). 

Kodaira fibrations are a particular class of algebraic surface of general type given by a non-isotrivial fibration $f\colon S\rightarrow B$ to a smooth algebraic curve $B$ such that all fibers $F$ of $f$ are smooth algebraic curves. The fundamental group of such a surface $S$ is then determined by the induced exact sequence
$1\rightarrow \pi_1(F)\rightarrow \pi_1(S)\rightarrow \pi_1(B)\rightarrow 1.$
This extension is in turn determined by the induced homomorphism $\Phi\colon \pi_1(B)\rightarrow \mathrm{Mod}(F)\subset O(\pi_1(F))$
into the mapping class group of $F$ (using that the genus $g$ of $F$ is at least $3$ \cite{kas} and so the center $Z(\pi_1(F))$ is trivial \cite{FM} together with the fact that extensions with outer action  $\Phi$ are parametrized by $H^2(\pi_1(B),  Z(\pi_1(F_b)))$ \cite[Corollary 6.8]{brown}). So it is natural to try to understand the possible connected monodromy groups of the resulting $\overline{\Phi}\colon \pi_1(B)\rightarrow GL(H^1(F,\Q))$ in the context of the broader question of understanding the fundamental groups of smooth projective varieties. 

Unfortunately there are few known constructions of Kodaira fibrations and in general their monodromy representations are poorly understood. The first examples of such surfaces discovered by Kodaira \cite{kodsur}, Atiyah \cite{atiyah}, and Hirzebruch \cite{hirzebruch} as well as subsequent constructions by Kas \cite{kas}, Riera \cite{riera}, Gonzalez-Diez--Harvey \cite{gonzalez}, Zaal \cite{zaal}, Bryan--Donagi \cite{BD}, Cattanese--Rollenske \cite{CR}, and Lee--L\"onne--Rollenske \cite{LLR} all rely on taking a suitable ramified cover of a product $C_1\times C_2$ of curves. In such constructions, the fiber $F$ is  a ramified cover of one of the two curves in the product, say $C_1$. Hence although the Jacobian $J(F)$ is indecomposable since $F$ is smooth, we know that $J(F)$ is non-simple in the sense that $J(F)$ is isogenous to a product of abelian varieties, one of which is $J(C_1)$. Thus understanding the behavior of monodromy representations of complete families of indecomposable non-simple abelian varieties over a curve has the potential to shed some light on the question of monodromy of Kodaira fibrations. 

In fact, because the Torelli locus in genus $g=3$ and $g=4$ is of low codimension in $\mathcal{A}_g$, the construction techniques developed in the proof of Theorem \ref{main theorem} lead to a reproof (Corollary \ref{cor kod reproof}) of a construction given in \cite{FKod} of a Kodaira fibration of fiber genus $3$ with connected monodromy group $Sp(4)$ as well as a new construction of a Kodaira fibration with fiber genus $4$:

\begin{corn}[\ref{kod cor}] 
For any choice of elliptic curve $E$, there exist infinitely many Kodaira fibrations $f\colon S\rightarrow B$ with fiber genus $4$ such that a very general fiber $F$ decomposes as $H^1(F,\Q)=H^1(E,\Q)\oplus W$ and the connected monodromy group of $f\colon S\rightarrow B$ is $Sp_\Q(W)\cong Sp(6,\Q)$. 
\end{corn}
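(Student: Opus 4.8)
The plan is to run the construction behind Theorem \ref{main theorem} and then use the low codimension of the Torelli locus to recognize the fibers as Jacobians of smooth curves. Apply Theorem \ref{main theorem} with $r = s = 1$, with constant factor $A_{c,1} = E$ and variable factor $A_{v,1}$ a general principally polarized abelian threefold, so that $g_{v,1} = 3 \ge 2$ and the fibers are $4$-dimensional. In the maximal case $d = g_{v,1} - 1 = 2$, the construction yields a complete family $\mathscr{A}\to \mathscr{B}$ of indecomposable $4$-dimensional principally polarized abelian varieties over a smooth projective surface $\mathscr{B}$, with very general fiber $A \sim E \times A_{v,1}$, so that $H^1(A,\Q) = H^1(E,\Q)\oplus W$ with $W := V_{v,1}$ of dimension $6$, and with connected monodromy group $Sp_\Q(W) \cong Sp(6,\Q)$ acting trivially on $H^1(E,\Q)$.

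Next I would cut this down to a family of curves. Write $\mathcal{J}_4\subset\mathcal{A}_4$ for the Torelli locus and $\overline{\mathcal{J}_4}$ for its closure. Since $\dim\mathcal{M}_4 = 9 = \dim\mathcal{A}_4 - 1$, the locus $\overline{\mathcal{J}_4}$ is a divisor in $\mathcal{A}_4$, and the complement $\overline{\mathcal{J}_4}\setminus\mathcal{J}_4$ consists of decomposable principally polarized abelian fourfolds; hence every \emph{indecomposable} fourfold lying in $\overline{\mathcal{J}_4}$ is in fact the Jacobian of a smooth genus-$4$ curve. The period map $\mathscr{B}\to\mathcal{A}_4$ of the family above has a surface image, so its preimage of the divisor $\overline{\mathcal{J}_4}$ has codimension at most one in $\mathscr{B}$; I would take $B$ to be a general complete curve inside this preimage, and check, by a Lefschetz-type argument of the kind used in the proof of Theorem \ref{main theorem}, that the restriction $\mathscr{A}|_B\to B$ still has connected monodromy group $Sp_\Q(W)$. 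Every fiber of $\mathscr{A}|_B\to B$ is then an indecomposable principally polarized abelian fourfold in $\overline{\mathcal{J}_4}$, hence the Jacobian $J(F_b)$ of a smooth genus-$4$ curve $F_b$, and after a harmless finite base change (to absorb automorphisms and the hyperelliptic locus) the $F_b$ assemble into a family $f\colon S\to B$ of smooth projective genus-$4$ curves with relative Jacobian $\mathscr{A}|_B\to B$.

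It then remains to verify that $f\colon S\to B$ is a Kodaira fibration with the asserted properties. The base $B$ is smooth projective and $f$ is a smooth morphism, so $S$ is a smooth projective surface, and $f$ is non-isotrivial because its connected monodromy group $Sp(6,\Q)$ is infinite. A very general fiber $F$ satisfies $J(F)\sim E\times A_{v,1}$, whence $H^1(F,\Q) = H^1(E,\Q)\oplus W$, and the connected monodromy group of $f$, viewed inside $\GL(H^1(F,\Q))$, acts trivially on $H^1(E,\Q)$ and equals $Sp_\Q(W)\cong Sp(6,\Q)$ by Theorem \ref{main theorem}. Finally, Theorem \ref{main theorem} provides infinitely many such families $\mathscr{A}\to\mathscr{B}$ and there are infinitely many admissible choices of the curve $B$, yielding infinitely many pairwise non-isomorphic Kodaira fibrations for the arbitrary elliptic curve $E$.

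The main obstacle I anticipate is the second step: arranging that the whole family lies in $\overline{\mathcal{J}_4}$ while keeping every fiber indecomposable and the connected monodromy equal to $Sp(6,\Q)$. This requires one to know, from the explicit construction in the proof of Theorem \ref{main theorem}, both that the parameter space genuinely has a dimension to spare — which is where the bound $d \le g_{v,1} - 1$ enters — and that passing to a general curve inside the divisor $\overline{\mathcal{J}_4}$ does not collapse the monodromy. The corresponding point in genus $3$ is easier, since there the Torelli locus is dense in $\mathcal{A}_3$ and the indecomposable fibers produced by Theorem \ref{main theorem} are automatically Jacobians of smooth curves; this is what yields the reproof in Corollary \ref{cor kod reproof}.
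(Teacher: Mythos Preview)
Your overall strategy is correct in spirit, but the order of operations differs from the paper's in a way that creates the very gap you flag at the end. The paper explicitly notes that this corollary ``doesn't follow directly'' from Theorem \ref{main theorem}, and instead reruns the underlying construction with the Torelli constraint imposed \emph{before} taking complete intersections. Concretely, the paper works inside the full $6$-dimensional $\alpha\mathcal{Y}$ (with $\mathcal{Y}=\{E\}\times\mathcal{A}_3[n]$), intersects there with the Torelli locus $\mathcal{T}_4$ to obtain $\mathcal{W}=\alpha\mathcal{Y}\cap\mathcal{T}_4$ of codimension at most $1$, checks that $\mathcal{W}\cap\Adec$ and the boundary of $\mathcal{W}^*$ both have codimension at least $2$ in $\mathcal{W}$, and only then applies Proposition \ref{bb prop} to $\mathcal{W}$ to produce the curve $C$ as a general complete intersection. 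This way the monodromy computation comes for free from Proposition \ref{bb prop}, since a very general point of $C$ is very general in $\mathcal{W}$ and hence corresponds to a general abelian threefold factor with derived Mumford--Tate group $Sp_\Q(W)$.

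By contrast, you first invoke Theorem \ref{main theorem} to produce a surface $\mathscr{B}$ and only afterwards intersect with $\overline{\mathcal{J}_4}$. At that point the preimage of $\overline{\mathcal{J}_4}$ in $\mathscr{B}$ is already a (possibly reducible) curve, so there is no ``general complete curve inside'' to choose; you are forced to take a component $B$. The Lefschetz-type argument you invoke does not apply here: Lefschetz controls monodromy of \emph{general} hyperplane sections, not of the intersection with a fixed divisor like $\overline{\mathcal{J}_4}$. Nothing prevents the very general point of $B$ from corresponding to a non-general abelian threefold factor, in which case Andr\'e's theorem only bounds $T$ by a possibly smaller derived Mumford--Tate group. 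So the step you correctly identify as the main obstacle is a genuine gap in your argument, and the paper's fix is precisely to reverse the order: cut by $\mathcal{T}_4$ first, inside the large parameter space, then take general complete intersections. The handling of the hyperelliptic locus via a double cover is the same in both.
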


This provides a new addition to the limited collection of Kodaira fibrations where something about the monodromy representation is understood. 

\subsection{Ideas from the proof of Theorem \ref{main theorem}}
The main tool used to prove Theorem \ref{main theorem}  is the language of Shimura varieties and Hecke correspondences, which we introduce in Sections \ref{section 1} and \ref{section 2}. Essentially, any element  $\alpha \in Sp(2g, \Q)$ defines a correspondence, called a \emph{Hecke correspondence} and also denoted $\alpha$, on the moduli space $\mathcal{A}_g$. The main idea is to show that if $\alpha$ is sufficiently general in $Sp(2g,\Q)$, then the image under this $\alpha$ of the Shimura variety $\mathcal{Z}$ parametrizing principally polarized abelian varieties with the desired decomposition intersects the decomposable locus $\mathcal{A}_g^{\mathrm{dec}}$ in high codimension (see Corollary \ref{main cor}). 

In fact, in order to work with a fine moduli space we will need to impose a level structure and consider the analogous Shimura variety $\mathcal{Z}$ sitting inside $\mathcal{A}_g[n]$ for some $n\ge 3$, where $\mathcal{A}_g[n]$ denotes the moduli space of principally polarized abelian varieties with fixed level $n$ structure. More precisely, Corollary \ref{main cor} shows that if $\alpha$ is general, then $\alpha \mathcal{Z}\cap \Adec\subset \Amdec$, where $\Amdec$ is the locus of principally polarized abelian varieties with two isogenous simple factors. 

We produce the families of abelian varieties with the desired properties by taking complete intersections of ample divisors on the Satake--Baily--Borel compactification $\mathcal{Z}^*$ of $\mathcal{Z}$. Since both the boundary $\mathcal{Z}^*\backslash \mathcal{Z}$ of $\mathcal{Z}^*$ and the intersection $\mathcal{Z}\cap \Amdec$ occur in high codimension, such a complete intersection $X$ will have the property that $\alpha X\cap \Adec=\emptyset$ (see Proposition \ref{bb prop}).

Computing the connected monodromy group of the corresponding family $\mathscr{A}$ of abelian varieties  follows from a result of Andr\'e \cite[Theorem 1]{andre} which says that the connected monodromy group of a variation of Hodge structures over a connected algebraic variety is a normal subgroup of the derived subgroup of the Mumford-Tate group of a very general fiber. Since a very general fiber of $\mathscr{A}$ is by construction isogenous to a very general point of $\mathcal{Y}$ (and hence they have isomorphic $\Q$-Hodge structures), one can use the properties of the Shimura variety $\mathcal{Z}$ to compute the derived Mumford-Tate group of this fiber. In the cases we consider, we then show that the connected monodromy group is in fact equal to this derived Mumford-Tate group.

\subsection{Other connected monodromy groups}
The strategy of the proof of Theorem \ref{main theorem} can also be used to produce complete families of indecomposable abelian varieties whose connected monodromy groups are other algebraic groups, not only products of symplectic groups. The particular combinatorics involved to compute the codimension of the boundary of Satake--Baily--Borel compactification of $\mathcal{Z}$ as well as the codimension of the locus $\alpha \mathcal{Z}\cap \Adec$ vary depending on the desired properties of the general fiber of the family. 

As an illustration of how the techniques of Theorem \ref{main theorem} can be used to produce complete families of indecomposable abelian varieties with other kinds of connected monodromy groups, we prove 
 a result similar to  Theorem \ref{main theorem} in Theorem \ref{thm unitary} in the case of the connected monodromy group being a unitary group. Rather than prove a full analogue of Theorem \ref{main theorem}, we simplify the theorem statement and proof by only considering the case when the fixed part  $A_{c,1}\times \cdots \times A_{c,r}$ of the family consist of pairwise non-isogenous elliptic curves without CM and the varying part consists of a single simple abelian variety with endomorphisms by a fixed  imaginary quadratic field $L$.  

In fact, a much more general version of Theorem \ref{thm unitary} which would be more directly analogous to Theorem \ref{main theorem} should be true, however the dimension-counting combinatorics become more involved and the issues involving the computation of the Mumford-Tate group become more subtle when multiple imaginary quadratic fields are involved.

The precise language of abelian varieties with endomorphisms by an imaginary quadratic field will be introduced in Sections \ref{section 1} and \ref{section unitary}, where we will introduce the moduli space $\mathcal{Z}_L(p,q)[n]$ of $p+q$-dimensional principally polarized abelian varieties with level structure $n$ that have endomorphisms by the imaginary quadratic field $L$ via a fixed action determined by positive integers $(p,q)$. 

Rather than state here the precise $\Q$-group arising as the connected monodromy group of the families we produce, we can summarize Theorem \ref{thm unitary} as:

\begin{thm}
Fix pairwise non-isogenous elliptic curves $E_1, \ldots, E_r$ without CM as well as an imaginary quadratic field $L$, a choice of positive integers $p,q$ such that $p+q\ge4$, and an abelian variety $Z$ which is general in the moduli space $\mathcal{Z}_L(p,q)[n]$. 
Then for any positive integer
\[d\le \min(2p, p+q-2, 2q)-1\]
 there exist infinitely many complete families  $f\colon \mathscr{A}\rightarrow S$ of maximal variation of abelian varieties of dimension $g=p+q+r$ with the properties:
\begin{enumerate}
\item The base $S$ is a smooth projective variety of dimension $d$
\item Every fiber of $f$ is indecomposable.
\item If $A$ is a very general fiber of $f$, then $A$ is isogenous to the product
$E_1\times \cdots \times E_r\times Z,$ where $\End_\Q(Z)=L$ with prescribed action determined by $(p,q)$
\item Letting $V=H^1(Z,\Q)$, the connected monodromy group of $\mathscr{A}$ is a $\Q$-form of $SU(p,q)$.
\end{enumerate}
\end{thm}

In analogy with Corollary \ref{main group cor} above, we deduce Corollary \ref{unitary cor}, which we summarized here as:

\begin{cor}
For any positive integers $p,q,$ and $g'$ such that $5\le p+q+1\le g'$ and any positive integer $d\le \min(2p, p+q-2, 2q)-1$, there is a $\Q$ form of the group $SU(p,q)$ which arises as the connected monodromy group of a $d$-dimensional complete family of maximal variation of $g'$-dimensional indecomposable abelian varieties over a smooth algebraic variety.
\end{cor}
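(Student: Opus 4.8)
The plan is to deduce the Corollary from Theorem~\ref{thm unitary} in essentially the same way that Corollary~\ref{main group cor} is deduced from Theorem~\ref{main theorem}. Given target integers $p, q, g'$ with $5\le p+q+1\le g'$ and $d\le \min(2p, p+q-2, 2q)-1$, I want to produce a $d$-dimensional complete family of indecomposable $g'$-dimensional abelian varieties whose connected monodromy group is a $\Q$-form of $SU(p,q)$. First I would choose auxiliary data for Theorem~\ref{thm unitary}: set $r=g'-(p+q)\ge 1$, pick $r$ pairwise non-isogenous elliptic curves $E_1,\dots,E_r$ without CM (possible since there are infinitely many such curves up to isogeny), fix an imaginary quadratic field $L$, and choose $Z$ general in $\mathcal{Z}_L(p,q)[n]$ with $\End_\Q(Z)=L$ acting via $(p,q)$. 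Since $p+q\ge 4$ and the constraint on $d$ matches exactly the hypothesis of Theorem~\ref{thm unitary}, the theorem applies and yields infinitely many complete families $f\colon \mathscr{A}\rightarrow S$ of $(p+q+r)=g'$-dimensional principally polarized abelian varieties over a smooth projective base $S$ of dimension $d$, each fiber indecomposable, with very general fiber isogenous to $E_1\times\cdots\times E_r\times Z$ and connected monodromy group a $\Q$-form of $SU(p,q)$.

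The remaining point is purely bookkeeping: one must check that the numerical hypothesis "$5\le p+q+1\le g'$" in the Corollary is precisely what is needed to legitimately invoke Theorem~\ref{thm unitary} with a nonempty choice of auxiliary elliptic curves. The condition $p+q+1\le g'$ forces $r=g'-p-q\ge 1$, so at least one fixed elliptic curve is required and the "fixed part" $E_1\times\cdots\times E_r$ is genuinely present; and $5\le p+q+1$, i.e. $p+q\ge 4$, is exactly the hypothesis $p+q\ge 4$ of Theorem~\ref{thm unitary}. I would also note that the lower bound $d\le\min(2p,p+q-2,2q)-1$ is automatically a positive-integer constraint only when $\min(2p,p+q-2,2q)\ge 2$, which holds once $p,q\ge 1$ and $p+q\ge 4$; so the range of admissible $d$ is nonempty, and the phrase "any positive integer $d$" in the Corollary is consistent. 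Having matched all hypotheses, conclusions (1)--(4) of Theorem~\ref{thm unitary} translate directly into the assertions of the Corollary, with the word "infinitely many" in the theorem giving in particular "there is" in the Corollary.

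There is essentially no obstacle here beyond the verification of the numerics, since all the real content---the construction of the families, the indecomposability of the fibers, and the identification of the connected monodromy group with a $\Q$-form of $SU(p,q)$---is already contained in Theorem~\ref{thm unitary}. If one wanted a self-contained statement one could alternatively record that the $\Q$-form obtained depends only on $L$ and the chosen polarization data and not on $g'$ or on the elliptic curves, so that "a $\Q$-form of $SU(p,q)$" is the sharpest one can say without further hypotheses; this matches the level of precision in Corollary~\ref{main group cor}. I would therefore write the proof in two or three sentences: fix $r = g'-p-q$, invoke Theorem~\ref{thm unitary} with the data $(E_1,\dots,E_r,L,p,q,Z)$ and the given $d$, and read off conclusions (1)--(4).
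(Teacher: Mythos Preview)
Your proposal is correct and follows exactly the same approach as the paper: set $r=g'-(p+q)\ge 1$, choose $r$ pairwise non-isogenous non-CM elliptic curves, and apply Theorem~\ref{thm unitary}. The paper's proof is essentially your final sentence, and your additional verification of the numerical hypotheses is accurate (and more detailed than what the paper records).
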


\subsection{Organization}

The organization of the paper is as follows. In Section \ref{section 1} we introduce the Satake--Baily--Borel compactification of a Shimura variety and the principal examples of Shimura varieties and their Satake--Baily--Borel compactifications that will be of use to us. Section \ref{section 2} then considers the decomposable locus $\Adec$ as well as the locus $\Amdec$ of abelian varieties with two isogenous simple factors and calculates the dimension of intersection of these loci with the Shimura varieties of interest.  

Then in Section \ref{Hecke section}, we introduce the language of Hecke correspondences and their relationship with isogenies of abelian varieties. In particular, we prove in Lemma \ref{polarized isogeny lemma} that an abelian variety $A$ whose endomorphism algebra is a product of fields which are either $\Q$ or an imaginary quadratic field has the property that if $A$ is isogenous  to another abelian variety $B$, then there is a Hecke correspondence $\alpha$ such that $\alpha A=B$. Section \ref{sec: Gamma} then shows that we may determine which elements of $Sp(2g,\Q)$ send points of $\Adec \backslash \Amdec$ to $\Adec$ by determining which such elements send a product of elliptic curves to $\Adec$. We then prove in Lemma \ref{dim lemma} that the elements of $Sp(2g,\Q)$ with this property form a proper subvariety and thus (Corollary \ref{main cor}) a general element $\alpha$ of $Sp(2g,\Q)$ has the property  
$\alpha \mathcal{Y}\cap \Adec\subset \Amdec$.

Section \ref{section MT}  introduces Mumford-Tate groups and Proposition \ref{bb prop} determines conditions under which one may produce a complete family of abelian varieties whose connected monodromy group is determined by the Mumford-Tate group of the general element of the family. The proofs of Theorem \ref{main theorem}  and Corollary \ref{main group cor} then occur in Section \ref{section main}. Section \ref{section kodaira} discusses the applications to Kodaira fibrations and in particular the new construction (Corollary \ref{kod cor}) of a Kodaira fibration of fiber genus $4$. 

Lastly Section \ref{section unitary} discusses constructions of families with other connected monodromy groups, in particular focusing on the example of a unitary connected monodromy group in the case that the varying part of the family consists of a single abelian variety with endomorphisms by an imaginary quadratic field.

\section{Shimura Varieties and the Satake--Baily--Borel compactification}\label{section 1}
Recall that a (connected) Shimura variety $Z$ is the quotient of a Hermitian symmetric domain $X^+$ by a congruence subgroup $\Gamma$ of some reductive algebraic group $G$ defined over $\mathbb{Q}$ such that $G(\mathbb{R})$ acts on $X^+$ by conjugation. 

The Satake--Baily--Borel compactification $(\Gamma\backslash X^+)^*$ of $\Gamma\backslash X^+$, first constructed in \cite{BB}, has the structure of a projective algebraic variety and the property that the boundary consists of finitely many explicitly described  Shimura varieties attached to particular subgroups of $G$.

The principal examples of Shimura varieties that will be of interest to us in this paper are the following:

\begin{example}\label{QBB}
The moduli space $\mathcal{A}_g=Sp(2g,\Z)\backslash \mathbb{H}_g$ of principally polarized abelian varieties $(A,\lambda)$ of dimension $g$, where $\mathbb{H}_g$ denotes the Siegel upper half-space of degree $g$, is a connected Shimura variety of dimension $\frac{1}{2}g(g+1)$. Since this $\mathcal{A}_g$ is only a coarse moduli space, in order to work with a fine moduli space we will need to impose a level structure and consider, for some fixed $n\ge 3$ the Shimura variety
\[\mathcal{A}_g[n]=\ker(Sp(2g,\Z)\rightarrow Sp(2g, \Z/n\Z))\backslash \mathbb{H}_g.\]
This Shimura variety $\mathcal{A}_g[n]$ is the moduli space of principally polarized abelian varieties with this fixed level structure $n\ge 3$. 
The Satake--Baily--Borel compactification $\mathcal{A}_g[n]^*$ of $\mathcal{A}_g[n]$ has the property that its boundary is the union of strata
\[\bigcup_{1\le i\le g-1} \mathcal{A}_i[n].\]
In particular, the boundary $\mathcal{A}_g[n]^*\backslash \mathcal{A}_g[n]$ has codimension in  $\mathcal{A}_g[n]^*$
\[\frac{1}{2}g(g+1)-\frac{1}{2}(g-1)(g)=g.\]

\end{example}

\begin{example}\label{unitaryBB}
Let $L$ be an imaginary quadratic field such that $L\hookrightarrow \End_\Q(A)$ for some simple abelian variety $A$ of dimension $g$. Let $\sigma, \overline{\sigma} \colon L\hookrightarrow \C$ denote the two embeddings of $L$ into $\C$. Let $V=H^1(A,\Q)$ and consider the action of $L\otimes_\Q \C$ on $V_\C\coloneqq V\otimes_\Q \C$. 

Then $V_\C$ decomposes as a sum $V_\C(\sigma)\oplus V_\C(\overline{\sigma})$, where $V_\C(\sigma)=\{v\in V_\C\mid \ell\cdot v=\sigma(\ell)v \  \forall \ \ell \in L\}$ and similarly for $V_\C(\overline{\sigma})$. Letting $p=\dim V_\C(\sigma)$ and $q=\dim V_\C(\overline{\sigma})$, we then have $p+q=g$. 

For such fixed positive integers $p, q$ such that $p+q=g$, consider the Hermitian symmetric domain
\[\mathcal{D}(p,q)=\{z\in M_{p,q}(\mathbb{C})\mid 1-z^t\overline{z} \text{ is positive Hermitian}\}.\]
Then for a given imaginary quadratic field $L$ the Shimura variety
\[\mathcal{Z}_L(p,q)[n]=\ker(SU((p,q),\mathcal{O}_L)\rightarrow SU((p,q), \mathcal{O}_L/n\mathcal{O}_L))\backslash \mathcal{D}(p,q)\]
parametrizes $g$-dimensional principally polarized abelian varieties $A$ with level $n$ structure such that $L\hookrightarrow \End_\Q(A)$ with given action described by the integers $(p,q)$. This Shimura variety $\mathcal{Z}_L(p,q)[n]$ has dimension $pq$. 

The strata of the boundary of the Satake--Baily--Borel compactification $\mathcal{Z}_L(p,q)[n]^*$ are of the form $\mathcal{Z}_L(p-r,q-r)[n]$ for $r>0$. Hence  the codimension of  the boundary $\mathcal{Z}_L(p,q)[n]^*\backslash \mathcal{Z}_L(p,q)[n]$ in $\mathcal{Z}_L(p,q)[n]^* $ is at least
\[pq-(p-1)(q-1)=(p+q)-1=g-1.\]

\end{example}


\section{The Decomposable Locus of Abelian Varieties}\label{section 2}
Fix an $n\ge 3$ and let $\mathcal{A}_g[n]$ be the fine moduli space of principally polarized abelian varieties $(A,\lambda)$ with level structure $n$. Let $\mathcal{A}_g^{\dec}[n]$ denote the decomposable locus of $ \mathcal{A}_g[n]$, meaning that
\[\mathcal{A}_g^{\dec}[n]=\{(A, \lambda)\in \mathcal{A}_g[n]\mid (A,\lambda)=(A_1,\lambda_1)\times (A_2,\lambda_2) \text{ st  } A_i\in A_{g_i}[n], \ g_i<g \text{ for } i=1,2\}.\]
In the above as well as throughout the paper we use the notation $\sim$ to mean ``isogenous to''.

We will also want to consider the locus of non-simple abelian varieties that have two isogenous factors. We denote this locus 
\[\Amdec=\{A\in \mathcal{A}_g[n]\mid A\sim \prod_{i=1}^k A_i^{n_i} \text{ and } n_i\ge 2 \text{ for some } 1\le i\le k\}.\]

The purpose of this section is to collect a number of combinatorial computations describing the dimension of intersection of $\Amdec$ with the sub-loci of $\mathcal{A}_g^{\dec}[n]$ that will be of interest to us in what follows. 

\subsection{ The locus $\Amdec$ and products  $\mathcal{A}_{g_1}[n]\times \cdots \times \mathcal{A}_{g_r}[n]$}
\begin{lem}\label{lem dim1}
Consider a product of moduli spaces in  $\mathcal{A}_g[n]$
\[\mathcal{Z}=\mathcal{A}_{g_1}[n]\times \cdots \times \mathcal{A}_{g_r}[n],\]
 where $2\le g_1\le \cdots \le g_r$ and   $\sum_{i=1}^rg_i=g$. Then $\mathcal{Z}\cap \Amdec$ has codimension $2g_1-2$ in $\mathcal{Z}$.
\end{lem}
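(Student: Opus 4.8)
The plan is to compute $\dim(\mathcal{Z}\cap \Amdec)$ directly by stratifying $\Amdec$ according to which pair of simple factors becomes isogenous, and then subtract from $\dim\mathcal{Z}=\sum_i \tfrac12 g_i(g_i+1)$. First I would observe that a point $(A_1,\dots,A_r)\in\mathcal{Z}$ lies in $\Amdec$ precisely when some $A_i$ itself has a repeated isogeny factor, or when two of the $A_i,A_j$ share a common simple isogeny factor; since the second condition forces a coincidence across two of the moduli factors and the first is an intrinsic condition on a single $A_i$, the maximal-dimensional stratum will come from the cheapest such coincidence. Intuitively, the cheapest way to force two isogenous simple factors is to split off a single elliptic curve from the two smallest-dimensional ambient spaces $\mathcal{A}_{g_1}[n]$ and $\mathcal{A}_{g_2}[n]$ (or, if $g_1=1$, from two copies of $\mathcal{A}_1[n]$, or from $\mathcal{A}_{g_1}[n]$ and a single elliptic sub-locus), and match those elliptic curves.

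The key steps, in order: (1) Decompose $\mathcal{Z}\cap\Amdec = \bigcup_{i} W_i \cup \bigcup_{i<j} W_{ij}$, where $W_i$ is the locus where $A_i\in\Amdec$ inside $\mathcal{A}_{g_i}[n]$ and $W_{ij}$ is the locus where $A_i$ and $A_j$ have a common simple isogeny factor. (2) Estimate $\dim W_{ij}$: the locus in $\mathcal{A}_{g_i}[n]$ of abelian varieties isogenous to $E\times A_i'$ for $E$ an elliptic curve has dimension $1 + \tfrac12(g_i-1)g_i + (\text{dimension of the isogeny/Hecke orbit parameters})$; the relevant comparison is with the full $\tfrac12 g_i(g_i+1)$, giving codimension roughly $g_i-1$ in the $i$-th factor for the ``contains a fixed elliptic curve'' locus, but since we get to move the elliptic curve in a $1$-dimensional family shared between the two factors we pay $g_i-1$ in factor $i$ and $g_j-1$ in factor $j$ but regain $1$ for the shared elliptic modulus — net codimension $(g_i-1)+(g_j-1)-1 = g_i+g_j-3$; wait, this needs care and I will instead argue via: the locus of pairs $(A_i,A_j)$ sharing an elliptic factor is the image of a map from $\mathcal{A}_1[n]\times (\text{Hecke data})\times\mathcal{A}_{g_i-1}[n]\times\mathcal{A}_{g_j-1}[n]$-type parameter space, whose dimension I compute and compare. (3) Estimate $\dim W_i$ using Lemma-type dimension counts for the self-isogenous locus inside a single $\mathcal{A}_{g_i}[n]$, which has codimension $2\cdot 1 - 2 = 0$? — no; rather one checks $W_i$ has strictly smaller dimension than the $W_{ij}$ coming from the two smallest factors, so it is not the dominant stratum. (4) Take the maximum over all strata; minimality of $g_1$ (and $g_1\le g_2$) makes the $W_{12}$-type stratum (or its degenerate elliptic analogue when $g_1$ is small) dominant, yielding codimension exactly $2g_1-2$ in $\mathcal{Z}$.

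The main obstacle I anticipate is step (2): correctly computing the dimension of the locus of abelian varieties in $\mathcal{A}_{g_i}[n]$ isogenous to a product with a specified elliptic factor, and tracking the interaction with level structure and with the Hecke/isogeny parameters so that the codimension bookkeeping is exact rather than just an inequality. I would handle this by noting that such a locus is a finite union of translates, under Hecke correspondences (whose properties are developed in Section \ref{Hecke section}), of the image of $\mathcal{A}_1[n]\times \mathcal{A}_{g_i-1}[n]\hookrightarrow \mathcal{A}_{g_i}[n]$, which has dimension $1 + \tfrac12(g_i-1)g_i$ and hence codimension $g_i - 1$; since Hecke correspondences are finite-to-finite they preserve dimension, so the ``$A_i\supset$ some elliptic curve'' locus in $\mathcal{A}_{g_i}[n]$ has codimension $g_i-1$. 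Then the locus $W_{ij}$ where $A_i$ and $A_j$ share a common elliptic factor: fixing the elliptic curve $E$ costs an extra parameter, so $\dim W_{ij} = \dim\mathcal{Z} - (g_i-1) - (g_j-1) + 1$, i.e.\ codimension $g_i+g_j-3$ — but one must then check against the alternative of matching $A_i$'s internal elliptic factor against a second internal elliptic factor of the same $A_i$, etc. Comparing all such, and using $2\le g_1\le g_2\le\cdots$, the minimum codimension is achieved when $i=1$ and we match an elliptic factor of $A_1$ against a second elliptic factor of $A_1$ itself — i.e.\ $A_1$ alone lies in $\Amdec$ — which by a direct count (the locus of $g_1$-folds isogenous to $E^2\times(\cdots)$) has codimension $2g_1-2$ in $\mathcal{A}_{g_1}[n]$ and hence in $\mathcal{Z}$; one finally verifies $2g_1-2 \le g_i+g_j-3$ for all relevant $i<j$, confirming that this is the dominant stratum.
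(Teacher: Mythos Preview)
Your overall strategy matches the paper's: stratify $\mathcal{Z}\cap\Amdec$ according to whether a single factor $A_i$ already lies in $\Amdec$ (your $W_i$, the paper's $\mathcal{B}_{i,i,d}$) or two distinct factors $A_i,A_j$ share a common simple isogeny factor (your $W_{ij}$, the paper's $\mathcal{B}_{i,j,d}$), compute the codimension of each stratum, and take the minimum. You also correctly identify the winning stratum in the end --- $A_1$ isogenous to $E^2\times(\cdots)$, codimension $2g_1-2$.

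However, there is a genuine error in your computation of the cross-factor stratum $W_{ij}$, and it matters. You argue that the locus in $\mathcal{A}_{g_i}[n]$ of abelian varieties with some elliptic factor has codimension $g_i-1$ (correct), and likewise $g_j-1$ in the $j$-th factor; you then say that sharing the elliptic curve ``regains'' a dimension, obtaining codimension $(g_i-1)+(g_j-1)-1=g_i+g_j-3$. This bookkeeping is backwards. Each of those codimension counts already uses the $1$-dimensional freedom of the elliptic modulus; forcing the two elliptic factors to be isogenous is an \emph{extra} codimension-$1$ constraint, not a bonus parameter. The correct codimension of $W_{ij}$ (for $d=1$) is $g_i+g_j-1$, as the paper computes directly by parametrizing via $\mathcal{A}_1[n]\times\mathcal{A}_{g_i-1}[n]\times\mathcal{A}_{g_j-1}[n]$. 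With your erroneous value, the final check ``$2g_1-2\le g_i+g_j-3$'' fails already for $i=1$, $j=2$ with $g_1=g_2$: then $W_{12}$ would have codimension $2g_1-3<2g_1-2$, making it the dominant stratum and giving the wrong answer. Your proposal is also internally inconsistent on this point: in step~(3) you assert $W_i$ is \emph{not} dominant, while in your last paragraph you (correctly) conclude that it is. Fix the $W_{ij}$ count to $g_i+g_j-1$; then $2g_1-2<g_1+g_2-1$ holds for all $g_1\le g_2$, and the argument goes through exactly as in the paper.
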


\begin{proof}
We will write points of $\mathcal{Z}$ as products $A_1\times \cdots \times A_r$ of abelian varieties where by assumption $A_k\in \mathcal{A}_{g_k}[n]$ for all $1\le k\le r$.
For $i\ne j$ define
\[ \mathcal{B}_{i,j,d}=\{A_1\times \cdots \times A_r\in \mathcal{Z}\mid \exists Z\in \mathcal{A}_d[n] \text{ st } A_i\sim Z\times A_i', \ A_j\sim Z\times A_j' \text{ for some } A_i'\in \mathcal{A}_{g_i-d}[n], A_j'\in\mathcal{A}_{g_j-d}[n] \}.\]
Similarly define
\[ \mathcal{B}_{i,i,d}=\{A_1\times \cdots \times A_r\in \mathcal{Z}\mid \exists Z\in \mathcal{A}_d[n] \text{ st } A_i\sim Z^2\times A_i' \text{ for some } A_i'\in \mathcal{A}_{g_i-2d}[n]\}.\]

Then observe that 
\begin{equation}\label{eq: Zunion}\mathcal{Z}\cap \Amdec=\bigcup_{i,j,d} \mathcal{B}_{i,j,d}.\end{equation}

Note that if $i\ne j$ we have
\begin{equation*}
\begin{aligned}
\dim \mathcal{B}_{i,j,d}&=\dim \mathcal{Z}-\dim \mathcal{A}_{g_i}[n]-\dim \mathcal{A}_{g_j}[n] + \dim \mathcal{A}_{g_i-d}[n]+ \dim \mathcal{A}_d[n]+ \dim \mathcal{A}_{g_j-d}[n]\\
&=\dim \mathcal{Z} - \frac{g_i(g_i+1)}{2}-\frac{g_j(g_j+1)}{2}+ \frac{(g_i-d)(g_i-d+1)}{2}+\frac{d(d+1)}{2} + \frac{(g_j-d)(g_j-d+1)}{2}\\
&=\dim \mathcal{Z}-\frac{1}{2}d(2g_i+2g_j+1-3d)
\end{aligned}
\end{equation*}
Hence when $i\ne j$, since $d\le g_i, g_j$ and $g_i, g_j\ge 2$, the component $\mathcal{B}_{i,j,d}$ of $\mathcal{Z}$ is of minimal codimension when $d=1$.  In this case, the component $\mathcal{B}_{i,j,1}$ is of codimension $g_i+g_j-1$ in $\mathcal{Z}$. It follows that a component $\mathcal{B}_{i,j,d}$ of $\mathcal{Z}\cap \Amdec$ with minimal codimension in $\mathcal{Z}$ is the component $\mathcal{B}_{1,2,1}$, which has codimension $g_1+g_2-1$. 

In the case $i=j$,  we have
\begin{equation*}
\begin{aligned}
\dim \mathcal{B}_{i,i,d}&=\dim \mathcal{Z}-\dim \mathcal{A}_{g_i}[n]+ \dim \mathcal{A}_{g_i-2d}[n] + \dim \mathcal{A}_{d}[n]\\
&=\dim \mathcal{Z}-\frac{g_i(g_i+1)}{2}
+\frac{(g_i-2d)(g_i-2d+1)}{2}+\frac{d(d+1)}{2}\\
&=\dim \mathcal{Z}-\frac{1}{2}d(4g_i+1-5d)
\end{aligned}
\end{equation*}
Hence when $i=j$, since $2d\le g_i$ the component $\mathcal{B}_{i,j,d}$ of $\mathcal{Z}$ is of minimal codimension when $d=1$.  In this case $\mathcal{B}_{i,i,1}$ is of codimension $2g_i-2$. It follows that a component of the form $\mathcal{B}_{i,i,d}$ of $\mathcal{Z}\cap \Amdec$ with minimal codimension in $\mathcal{Z}$ is the component $\mathcal{B}_{1,1,1}$, which has codimension $2g_1-2$. Note that the assumption that $2\le g_1$ ensures that $\mathcal{B}_{1,1,1}\ne \emptyset$. 

Since $2g_1-2<g_1+g_2-1$, it follows from \eqref{eq: Zunion} that $\mathcal{B}_{1,1,1}$ is a component of $\mathcal{Z}\cap \Amdec$ of minimal codimension in $\mathcal{Z}$ and so $\mathcal{Z}\cap \Amdec$ has codimension $2g_1-2$ in $\mathcal{Z}$. 
\end{proof}

\begin{lem}\label{lem dim2}
Fix a product $P=A_{c,1}\times \cdots \times A_{c,r}$ of simple pairwise non-isogenous abelian varieties such that each $A_{c,i}\in \mathcal{A}_{g_{c,i}}[n]$ and $1\le g_{c,1}\le \cdots \le g_{c,r}$.
Consider a subvariety of $\mathcal{A}_g[n]$ of the form
\[\mathcal{Y}=\{P\}\times \mathcal{A}_{g_{v,1}}[n]\times \cdots \times \mathcal{A}_{g_{v,s}}[n],\]  
where $g_{v,1}\le \cdots \le g_{v,r}$. Then $\mathcal{Y}\cap \Amdec$ has codimension in $\mathcal{Y}$ given by
\[\min(2g_{v,1}-2,\frac{1}{2}g_{c,1}(2g_{v,1}+1-g_{c,1}), \frac{1}{2}g_{c,r}(2g_{v,1}+1-g_{c,r})).\] 
\end{lem}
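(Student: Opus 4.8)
The plan is to mimic the structure of the proof of Lemma \ref{lem dim1}, decomposing $\mathcal{Y} \cap \Amdec$ into a union of components indexed by which two isogenous factors are being paired up, and then computing the dimension of each component. The new wrinkle compared to Lemma \ref{lem dim1} is that now $\mathcal{Y}$ has a \emph{fixed} part $\{P\}$ consisting of the pairwise non-isogenous simple abelian varieties $A_{c,1}, \ldots, A_{c,r}$, which contributes nothing to the dimension, and a \emph{varying} part $\mathcal{A}_{g_{v,1}}[n] \times \cdots \times \mathcal{A}_{g_{v,s}}[n]$. A point of $\mathcal{Y}$ lies in $\Amdec$ exactly when two of its simple isogeny factors (counted with multiplicity) coincide up to isogeny. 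Since the $A_{c,i}$ are simple, pairwise non-isogenous, and fixed, the only ways a repetition can be forced to occur are: (a) one of the varying factors $A_{v,j}$ acquires a simple isogeny factor isogenous to some $A_{c,i}$; (b) two distinct varying factors $A_{v,j}$ and $A_{v,k}$ share a common simple isogeny factor; or (c) a single varying factor $A_{v,j}$ is isogenous to $Z^2 \times A_{v,j}'$ for some abelian variety $Z$. (A further subcase where the two repeated factors are both pulled from within the fixed part is impossible since the $A_{c,i}$ are simple and pairwise non-isogenous.)

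First I would set up the analogous loci. For $1 \le i \le r$, $1 \le j \le s$, write $\mathcal{C}_{i,j} = \{P\} \times \{A_1 \times \cdots \times A_s \in \mathcal{A}_{g_{v,1}}[n] \times \cdots \mid A_j \sim A_{c,i} \times A_j' \text{ for some } A_j' \in \mathcal{A}_{g_{v,j} - g_{c,i}}[n]\}$; for $j \ne k$ and $1 \le d \le \min(g_{v,j}, g_{v,k})$, write $\mathcal{B}_{j,k,d}$ for the locus where $A_j$ and $A_k$ share a common isogeny factor of dimension $d$; and for a single $j$ and $1 \le d \le \lfloor g_{v,j}/2 \rfloor$, write $\mathcal{B}_{j,j,d}$ for the locus where $A_j \sim Z^2 \times A_j'$ with $\dim Z = d$. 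Then $\mathcal{Y} \cap \Amdec = \bigcup \mathcal{C}_{i,j} \cup \bigcup \mathcal{B}_{j,k,d} \cup \bigcup \mathcal{B}_{j,j,d}$. Exactly as in Lemma \ref{lem dim1}, the dimension of each such locus is computed by the substitution rule $\dim \mathcal{A}_{g_{v,j}}[n] \leadsto \dim \mathcal{A}_{g_{v,j} - d}[n] + \dim \mathcal{A}_d[n]$ (or $\leadsto \dim \mathcal{A}_{g_{v,j}-2d}[n] + \dim \mathcal{A}_d[n]$ in case (c), using that a general element of $\mathcal{A}_d[n]$ appears), keeping in mind that in case (a) the "$Z$" is forced to be $A_{c,i}$ itself so it contributes no moduli.

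Next I would carry out the three dimension computations, each a short application of the formula $\dim \mathcal{A}_m[n] = \tfrac{1}{2}m(m+1)$ as in Lemma \ref{lem dim1}, and minimize over the free parameters. For the case (c) loci $\mathcal{B}_{j,j,d}$: the computation is identical to the $i=j$ case in Lemma \ref{lem dim1}, giving codimension $\tfrac12 d(4g_{v,j}+1-5d)$ in the varying part, minimized at $d=1$ over all $j$, hence the smallest is $2g_{v,1}-2$ (using $g_{v,1} \le g_{v,j}$ and $g_{v,1} \ge 2$, which is where the hypothesis $d \le \min_j g_{v,j}-1$ implicit from the main theorem, or rather $g_{v,1}\ge 2$, is needed to guarantee nonemptiness). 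For the case (b) loci $\mathcal{B}_{j,k,d}$: this is identical to the $i \ne j$ case in Lemma \ref{lem dim1}, giving codimension $g_{v,j}+g_{v,k}-1 \ge 2g_{v,1}-1$ at worst, which always exceeds $2g_{v,1}-2$, so these never give the minimum. For the case (a) loci $\mathcal{C}_{i,j}$: removing the $A_{c,i}$ factor from $A_j$ drops the dimension of the $j$-th varying factor from $\tfrac12 g_{v,j}(g_{v,j}+1)$ to $\tfrac12 (g_{v,j}-g_{c,i})(g_{v,j}-g_{c,i}+1)$, with no compensating moduli for $A_{c,i}$, giving codimension $\tfrac12\bigl((g_{v,j})(g_{v,j}+1) - (g_{v,j}-g_{c,i})(g_{v,j}-g_{c,i}+1)\bigr) = \tfrac12 g_{c,i}(2g_{v,j}+1-g_{c,i})$ in $\mathcal{Y}$; one then checks this is monotone in $g_{v,j}$ (so minimized at $j=1$) and, as a function of $g_{c,i}$ on the relevant range, is a concave parabola minimized at the endpoints $g_{c,i} \in \{g_{c,1}, g_{c,r}\}$ — which is precisely why both $\tfrac12 g_{c,1}(2g_{v,1}+1-g_{c,1})$ and $\tfrac12 g_{c,r}(2g_{v,1}+1-g_{c,r})$ appear in the statement.

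Finally I would combine: $\mathcal{Y} \cap \Amdec$ is a finite union of the above closed loci, so its codimension is the minimum of their codimensions, and the $\mathcal{B}_{j,k,d}$ terms are dominated, leaving exactly $\min\bigl(2g_{v,1}-2, \tfrac12 g_{c,1}(2g_{v,1}+1-g_{c,1}), \tfrac12 g_{c,r}(2g_{v,1}+1-g_{c,r})\bigr)$. I expect the main obstacle to be the bookkeeping in case (a): one must verify that the function $d \mapsto \tfrac12 g_{c,i}(2g_{v,1}+1-g_{c,i})$ as $g_{c,i}$ ranges over $\{g_{c,1},\ldots,g_{c,r}\}$ really does attain its minimum at one of the two extreme values (a concavity/endpoint argument for the quadratic $t \mapsto t(2g_{v,1}+1-t)$ on the interval $[g_{c,1}, g_{c,r}]$), and that no $\mathcal{C}_{i,j}$ or $\mathcal{B}_{j,k,d}$ with $d > 1$ or $j > 1$ can sneak below the claimed minimum — both of which are elementary but need to be stated carefully, together with the nonemptiness checks (e.g. $\mathcal{C}_{i,j}$ is nonempty only when $g_{c,i} \le g_{v,j}$, which holds on the stated range, and $\mathcal{B}_{1,1,1} \ne \emptyset$ requires $g_{v,1} \ge 2$).
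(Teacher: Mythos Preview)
Your proposal is correct and follows essentially the same approach as the paper. The only organizational difference is that the paper packages your cases (b) and (c) together as $\{P\}\times(\mathcal{Z}\cap\Amdec)$ and invokes Lemma~\ref{lem dim1} directly for that piece, whereas you inline those computations; the treatment of the $\mathcal{C}_{i,j}$ loci and the concavity/endpoint argument for minimizing over $g_{c,i}$ are identical.
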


\begin{proof}
We may consider $\mathcal{Y}$ as the product $\{P\}\times \mathcal{Z}$, where $\mathcal{Z}$ is the product of moduli spaces introduced in Lemma \ref{lem dim1}. 
Write points of $\mathcal{Y}$ as products $P\times A_1\times \cdots \times A_r$ of abelian varieties where by assumption $A_k\in \mathcal{A}_{g_{v,k}}[n]$ for all $1\le k\le s$. Then define
\[\mathcal{C}_{i}=\{P\times A_1\times \cdots \times A_r\in \mathcal{Y}\mid A_i\sim A_{c,j}\times A_i' \text{ for some } 1\le j\le r \text{ and } A_i'\in \mathcal{A}_{g_{v,i}-g_{c,j}}[n]\}.\]

Then observe that
\begin{equation}\label{eq: Yunion}\mathcal{Y}\cap \Amdec= (\{P\}\times (\mathcal{Z}\cap \Amdec)) \cup \bigcup_{i} \mathcal{C}_{i}.\end{equation}

Note that 
\begin{equation*}
\begin{aligned}
\dim \mathcal{C}_{i}&=\dim \mathcal{Y}-\dim \mathcal{A}_{g_{v,i}}[n]+ \dim \mathcal{A}_{g_{v,i}-g_{c,j}}[n]\\
&=\dim \mathcal{Y}-\frac{g_{v,i}(g_{v,i}+1)}{2}+ \frac{(g_{v,i}-g_{c,j})(g_{v,i}-g_{c,j}+1)}{2}\\
&=\dim \mathcal{Y}-\frac{1}{2}g_{c,j}(2g_{v,i}+1-g_{c,j})
\end{aligned}
\end{equation*}
Hence the component $\mathcal{C}_{i}$ of $\mathcal{Z}$ is of minimal codimension when $g_{v,i}$ is minimal and $g_{c,j}$ is either minimal or maximal. Hence $\bigcup_{i} \mathcal{C}_{i}$ has codimension in $\mathcal{Y}$ given by
\[\frac{1}{2}\min(g_{c,1}(2g_{v,1}+1-g_{c,1}), g_{c,r}(2g_{v,1}+1-g_{c,r})).\]

The result then follows from the decomposition in \eqref{eq: Yunion} together with Lemma \ref{lem dim1}.

\end{proof}

\subsection{The locus $\Amdec$ and $\mathcal{Z}_L(p,q)[n]$}
For the purpose of our results in Section \ref{section unitary}, we collect here some results on the intersection of $\Amdec$ with Shimura varieties of the form $\mathcal{Z}_L(p,q)[n]$ for some fixed imaginary quadratic field $L$ and positive integers $p$ and $q$.

\begin{lem}\label{lem dim3}
Consider the moduli space $\mathcal{Z}_L(p,q)[n]\subset \mathcal{A}_g[n]$ for some fixed imaginary quadratic field $L$ and positive integers $p$ and $q$. Then $\mathcal{Z}_L(p,q)[n]\cap \Amdec$ has codimension $\min(2p, p+q-2, 2q)$ in $\mathcal{Z}_L(p,q)[n]$.
\end{lem}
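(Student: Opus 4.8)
The plan is to stratify $\mathcal{Z}_L(p,q)[n]\cap \Amdec$ according to the type of "repeated factor" that produces the non-simplicity and compute the dimension of each stratum using the dimension formulas for the Shimura varieties $\mathcal{Z}_L(p',q')[n]$ and for $\mathcal{A}_d[n]$ recorded in Examples~\ref{QBB} and~\ref{unitaryBB}. An abelian variety $A\in\mathcal{Z}_L(p,q)[n]$ has $L\hookrightarrow\End_\Q(A)$ with signature $(p,q)$, so any isogeny decomposition $A\sim\prod B_i^{n_i}$ is compatible with the $L$-action; the $L$-action distributes over the isogeny factors, and each simple factor $B_i$ either carries an induced $L$-action (contributing some signature $(p_i,q_i)$) or comes in a Weil-restriction-type pair swapped by the nontrivial element of $\Gal(L/\Q)$. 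Accordingly the locus $\mathcal{Z}_L(p,q)[n]\cap\Amdec$ is a finite union of sub-loci indexed by: (a) a repeated factor $Z$ which itself has an $L$-action of signature $(a,b)$ with $a+b\le p+q$ and $A\sim Z^2\times A'$ where $A'\in\mathcal{Z}_L(p-2a,q-2b)[n]$; and (b) a repeated factor $Z$ of dimension $d$ with $A\sim Z\times \overline{Z}\times A'$, where $\overline{Z}$ is the conjugate and $A'\in\mathcal{Z}_L(p-d,q-d)[n]$ — this is the case where $Z$ has no $L$-structure but $Z\times\overline Z$ does, contributing signature $(d,d)$ to $A$.

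For each stratum I would write down its dimension exactly as in Lemmas~\ref{lem dim1} and~\ref{lem dim2}: for case (a) the codimension in $\mathcal{Z}_L(p,q)[n]$ is
\[
pq - \bigl[(p-2a)(q-2b) + ab\bigr] = 2aq + 2bp - 4ab - ab = 2aq+2bp-5ab\,?
\]
— here one must be careful with the combinatorics, using $\dim\mathcal{Z}_L(a,b)[n]=ab$, and the minimum over admissible $(a,b)$ (with $a\le p/2$, $b\le q/2$, and $(a,b)\ne(0,0)$) is attained at the "extreme" choices $(a,b)=(1,0)$, giving codimension $2q$, or $(a,b)=(0,1)$, giving codimension $2p$; for case (b) the codimension is $pq-[(p-d)(q-d)+d^2\cdot\tfrac12(?)]$, using $\dim\mathcal{A}_d[n]=\tfrac12 d(d+1)$, and the minimum over $1\le d\le\min(p,q)$ is attained at $d=1$, giving codimension $(p-1)+(q-1)=p+q-2$. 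One then takes the minimum over all strata, which is $\min(2p,\,p+q-2,\,2q)$, and checks each contributing stratum is nonempty (e.g. case (b) with $d=1$ is nonempty precisely because $p,q\ge 1$, and the cases $(1,0),(0,1)$ require $p\ge 2$, resp. $q\ge 2$, but when $p=1$ we have $2p=2\le p+q-2$ only if $q\ge 3$, etc., so the stated minimum is still correct).

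The main obstacle I expect is bookkeeping the boundary/degenerate cases correctly: making sure the list of strata is exhaustive (that every way an $L$-linear abelian variety can acquire a repeated isogeny factor falls into case (a) or (b), including subtleties when the repeated factor is itself of CM type by $L$ versus when it only acquires $L$-multiplication after pairing with its conjugate), and verifying nonemptiness of the minimizing stratum in the edge cases $p=1$ or $q=1$, where some of the three quantities $2p,p+q-2,2q$ coincide or where $(1,0)$/$(0,1)$-type strata are empty. Once the stratification is pinned down, the dimension count itself is a routine substitution into the two polynomial formulas, entirely parallel to the proof of Lemma~\ref{lem dim1}, and the final answer follows by taking the minimum.
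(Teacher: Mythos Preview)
Your approach is essentially the same as the paper's: stratify by whether the repeated simple factor $Z$ carries its own $L$-action (your case (a), the paper's ``$L\hookrightarrow\End_\Q(Z)$'' case) or not (your case (b), the paper's ``$L$ does not embed in $\End_\Q(Z)$'' case), compute each stratum's dimension via $\dim\mathcal{Z}_L(a,b)[n]=ab$ and $\dim\mathcal{A}_d[n]=\tfrac12 d(d+1)$, and minimize. Your codimension formulas $2aq+2bp-5ab$ for case (a) and $d(p+q)-\tfrac{3}{2}d^2-\tfrac12 d$ for case (b) are exactly what the paper obtains (after the substitution $k=2a$, $\ell=2b$), and the minima at $(a,b)\in\{(1,0),(0,1)\}$ and $d=1$ give $2q$, $2p$, and $p+q-2$ respectively.

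One small conceptual wrinkle: in case (b) you describe the repeated factor as ``$Z\times\overline{Z}$'' with $\overline{Z}$ the Galois conjugate. That framing is misleading---if $Z$ and $\overline{Z}$ were non-isogenous conjugates the point would not lie in $\Amdec$ at all. The correct picture (which the paper uses and which your dimension count already reflects) is that the repeated factor is genuinely $Z^2$ with $Z$ having no $L$-action, and the $L$-action on $Z^2$ comes from an embedding $L\hookrightarrow M_2(\Q)\subset\End_\Q(Z^2)$; this forces the signature contribution $(d,d)$. Since your parametrization of the stratum by $\mathcal{A}_d[n]\times\mathcal{Z}_L(p-d,q-d)[n]$ is nonetheless correct, this does not affect the computation, only the exposition.
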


\begin{proof}
Suppose $P$ is a point of $\mathcal{Z}_L(p,q)[n]\cap \Amdec$. Then $P\sim Z^2\times P'$ such that $Z^2\in \mathcal{Z}_L(k,\ell)[n]$ and $P'\in \mathcal{Z}_L(p-k,q-\ell)[n]$ for some non-negative integers $k,\ell \ge 0$ such that $k+\ell\ge 2$ is even. 

If $L\hookrightarrow \End_\Q(Z)$, then both $k$ and  $\ell$ are even and the locus of such points in $\mathcal{Z}_L(p,q)[n]\cap \Amdec$ has dimension
\[\dim \mathcal{Z}_L(k/2,\ell/2)[n] + \dim \mathcal{Z}_L(p-k,q-\ell)[n]=pq-p\ell-kq+\frac{5}{4}k\ell.\]
This value is maximal when $k+\ell=2$ in which case the maximal value is $pq-\max(2p, 2q)$. 

If $L$ does not embed in $\End_\Q(Z)$, then $k=\ell$ and the locus of points has dimension
\[\dim \mathcal{A}_{k}[n]\times \dim \mathcal{Z}_L(p-k,q-k)[n]=pq-\frac{1}{2}k(p+q-\frac{3}{2}k-\frac{1}{2}).\]
This value is maximal when $k=1$ in which case its value is $pq-(p+q-2)$. 

It follows that the codimension of $\mathcal{Z}_L(p,q)[n]\cap \Amdec$ in $\mathcal{Z}_L(p,q)[n]$ is $\min(2p, p+q-2, 2q)$.
\end{proof}

\begin{lem}\label{lem dim4}
Fix a product $P=E_1\times \cdots \times E_r$ of pairwise non-isogenous elliptic curves without CM. Consider a subvariety of $\mathcal{A}_g[n]$ of the form
\[\mathcal{Y}=\{P\}\times \mathcal{Z}_L(p,q)[n]\]
for some  fixed imaginary quadratic field $L$ and positive integers $p,q$. Then  $\mathcal{Y}\cap \Amdec$ has codimension $\min(2p, p+q-2, 2q)$ in $\mathcal{Y}$. 
\end{lem}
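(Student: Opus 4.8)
The plan is to follow the template of Lemma~\ref{lem dim2}, decomposing $\mathcal{Y}\cap\Amdec$ into the ``internal'' part, where a repeated isogeny factor already occurs inside $\mathcal{Z}_L(p,q)[n]$, and ``cross'' parts $\mathcal{C}_i$ in which one of the fixed elliptic curves $E_i$ becomes isogenous to a simple factor of the varying abelian variety. Writing a point of $\mathcal{Y}$ as $P\times Z$ with $Z\in\mathcal{Z}_L(p,q)[n]$, I would set, for $1\le i\le r$,
\[\mathcal{C}_i=\{P\times Z\in\mathcal{Y}\mid E_i\text{ is isogenous to a simple factor of }Z\},\]
and first check the set-theoretic identity
\begin{equation}\label{eq:Yunion-unitary}
\mathcal{Y}\cap\Amdec=\bigl(\{P\}\times(\mathcal{Z}_L(p,q)[n]\cap\Amdec)\bigr)\cup\bigcup_{i=1}^{r}\mathcal{C}_i.
\end{equation}
The inclusion $\supseteq$ is immediate; for $\subseteq$ one uses that the $E_i$ are pairwise non-isogenous, so that if $P\times Z$ has a repeated simple isogeny factor $B$, then either $B$ is isogenous to some $E_i$ --- and since $E_i$ occurs only once in $P$ it must then also occur among the simple factors of $Z$, placing $P\times Z$ in $\mathcal{C}_i$ --- or all copies of $B$ occur inside $Z$, so that $Z\in\Amdec$.

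The crux is bounding $\dim\mathcal{C}_i$, and this is the one step that is not pure bookkeeping. The key input is that $E_i$ has no complex multiplication, so $\End_\Q(E_i)=\Q$. If $E_i$ is isogenous to a simple factor of $Z$ occurring with multiplicity $m$, then projecting the field $L\hookrightarrow\End_\Q(Z)$ onto the factor $M_m(\Q)$ of $\End_\Q(Z)$ attached to that simple factor gives an embedding $L\hookrightarrow M_m(\Q)$ (a ring map out of a field being injective); since $[L:\Q]=2$ this forces $m$ to be even, in particular $m\ge 2$, so that $Z\sim E_i^2\times Z'$ with $Z'$ carrying an induced $L$-action. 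A short computation of the action of $L\otimes_\Q\C$ on $H^1(E_i^2,\C)$ --- again using that $E_i$ has no CM, so that the relevant element of $M_2(\Q)$ is non-scalar --- shows the $L$-action on $E_i^2$ has signature $(1,1)$, whence $Z'\in\mathcal{Z}_L(p-1,q-1)[n]$ (the strata with $E_i$ entering with larger multiplicity being of strictly smaller dimension). Fixing $E_i$ and letting $Z'$ vary then gives, exactly as for the cross-terms in Lemma~\ref{lem dim2},
\[\dim\mathcal{C}_i=\dim\mathcal{Y}-\dim\mathcal{Z}_L(p,q)[n]+\dim\mathcal{Z}_L(p-1,q-1)[n]=\dim\mathcal{Y}-(p+q-1),\]
so $\mathcal{C}_i$ has codimension $p+q-1$ in $\mathcal{Y}$.

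To conclude, note that since $\{P\}$ is a point the internal component has the same codimension in $\mathcal{Y}$ as $\mathcal{Z}_L(p,q)[n]\cap\Amdec$ has in $\mathcal{Z}_L(p,q)[n]$, namely $\min(2p,p+q-2,2q)$ by Lemma~\ref{lem dim3}. Since $\min(2p,p+q-2,2q)\le p+q-2<p+q-1$, the decomposition \eqref{eq:Yunion-unitary} shows that this internal component is the one of minimal codimension, and hence $\mathcal{Y}\cap\Amdec$ has codimension $\min(2p,p+q-2,2q)$ in $\mathcal{Y}$. I expect the main obstacle to be the signature-$(1,1)$ assertion for the $L$-action on $E_i^2$, together with the implicit claim that, with $E_i$ fixed, only finitely many such structures occur (so that fixing $E_i$ costs exactly one dimension); both hold precisely because $E_i$ has no CM, and are of the same nature as the facts about fixed factors already used in Lemma~\ref{lem dim2}. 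This ``even multiplicity'' phenomenon is also what explains why, unlike in Lemma~\ref{lem dim2}, no cross-term codimension appears in the final answer.
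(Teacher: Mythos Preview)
Your proof is correct and rests on the same key observation as the paper's: a non-CM elliptic curve $E_i$ can only occur as a simple factor of $Z\in\mathcal{Z}_L(p,q)[n]$ with even (hence $\ge 2$) multiplicity, since the $L$-action makes the $E_i$-isotypic piece of $H^1(Z,\Q)$ into an $L$-vector space. The paper exploits this more directly than you do: once $m\ge 2$ is established, $Z$ itself already lies in $\Amdec$, so in fact $\mathcal{C}_i\subset\{P\}\times(\mathcal{Z}_L(p,q)[n]\cap\Amdec)$ and your decomposition collapses to the equality $\mathcal{Y}\cap\Amdec=\{P\}\times(\mathcal{Z}_L(p,q)[n]\cap\Amdec)$ --- your separate computation of $\operatorname{codim}\mathcal{C}_i=p+q-1$ and the signature-$(1,1)$ analysis, while correct, are then unnecessary.
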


\begin{proof}
Let $Q\in \mathcal{Y}\cap \Amdec$. Then $Q\sim Z^2\times Q'$ for some abelian varieties $Z, Q'$. Since the $E_i$ are chosen to be pairwise non-isogenous, it follows that $Z^2$ is not isogenous to a subset of the $E_i$. Moreover, if $Z$ is isogenous to some subset of the $E_i$, then $Z\times Q'$ must be isogenous to an element of $\mathcal{Z}_L(p,q)[n]$. But since none of the $E_i$ have CM and none are mutually isogenous, this can only happen if $Z\times Q'$ is isogenous to an element of $\mathcal{Z}_L(p,q)[n]\cap \Amdec$. In other words, writing $Q=P\times R$ where $R\in \mathcal{Z}_L(p,q)[n]$, in this situation we have $R\sim Z\times Q'$ and hence $R\in \mathcal{Z}_L(p,q)[n]\cap \Amdec$. In the case that $Z$ is not isogenous to a subset of the $E_i$, then $Z$ has a simple factor $Z'$ such that $Z'^2$ is isogenous to an element of $\mathcal{Z}_L(p,q)[n]$. Hence once again, writing $Q=P\times R$, we have that $Z'^2$ is a simple factor of $R$ and so $R\in \mathcal{Z}_L(p,q)[n]\cap \Amdec$.

Therefore, the codimension of $\mathcal{Y}\cap \Amdec$  in $\mathcal{Y}$ is in fact given by the codimension of $\mathcal{Z}_L(p,q)[n]\cap \Amdec$ in $\mathcal{Z}_L(p,q)[n]$, t
 By Lemma \ref{lem dim3}, this locus has codimension $\min(2p, p+q-2, 2q)$. 
\end{proof}


\section{Hecke Correspondances and Polarized Isogenies}\label{Hecke section}
\begin{defn}
Let $Z=\Gamma\backslash X^+$ be a connected Shimura variety. Recall from Section \ref{section 1} that this means $\Gamma$ is a congruence subgroup of some reductive $\Q$-algebraic group $G$ such that $G(\mathbb{R})$  acts on $X^+$ by conjugation. 
The \emph{Hecke correspondence} associated to an element $a\in G(\Q)$ consists of  the diagram
\[\begin{tikzcd}
\Gamma\backslash X^+&\arrow{l}[swap]{q}\Gamma_a\backslash X^+\arrow{r}{q_a}&\Gamma\backslash X^+
\end{tikzcd}\]
where 
\begin{itemize}
\item $\Gamma_a=\Gamma \cap a^{-1}\Gamma a$
\item $q(\Gamma_ax)=\Gamma x$
\item$ q_a(\Gamma_a x)=\Gamma ax$
\end{itemize}
In the above diagram, both $q$ and $q_a$ are finite morphisms of degree $[\Gamma:\Gamma_a]$. 

If $Z'$ is a closed irreducible subvariety of $Z$, any irreducible component of $q_a(q^{-1}Z)$ is called a \emph{Hecke translate} of $Z'$, which we denote by $aZ'$. 
\end{defn}

Suppose that $f\colon A\rightarrow B$ is an isogeny between principally polarized abelian varieties $(A,\lambda)$ and $(B,\mu)$. Then the pullback $f^*\mu$ is a polarization on $A$. We say that $f$ is a \emph{polarized isogeny} if there exists an $n\in \mathbb{Z}$ such that $f^*\mu=n\lambda$. For a more thorough treatment of the distinction between isogenies and polarized isogenies see \cite{orr}. 

A point $Q\in \mathcal{A}_g[n]$ is a Hecke translate of another point $P\in \mathcal{A}_g[n]$ if and only if $P$ and $Q$ are related by a polarized isogeny. However the set of polarized isogenies between $P$ and $Q$ can be significantly smaller than the set of isogenies between them. For instance, in \cite[Proposition 3.1]{orr}, Orr shows that the isogeny class of an abelian surface with multiplication by a totally real quadratic field contains infinitely many distinct polarized isogeny classes. 

The goal of this section is to establish conditions in Lemma \ref{polarized isogeny lemma} under which we may conclude that two isogenous abelian varieties are related by a Hecke correspondence. Before doing so, we will establish a bit of notation for abelian varieties and their isogenies.

Recall that if $(A,\lambda)\in \mathcal{A}_g[n]$, the polarization $\lambda$ is  the class of an ample line bundle in $NS(A)$ and that $\nu$ induces an isogeny $\phi_\nu\colon A\rightarrow A^\vee$ of $X$ to its dual abelian variety $A^\vee$. Let us denote
 \[\End_{\Q}(A)\coloneqq \End(A)\otimes_\Z \Q.\]
 Then $\nu$ induces an involution, called the \emph{Rosati involution}, on $\End_\Q(A)$ given by
\begin{equation*}
\begin{aligned}
\End_\Q(A)&\rightarrow \End_\Q(A)\\
f&\mapsto \phi_\lambda^{-1}\circ \hat{f} \circ \phi_\lambda,
\end{aligned}
\end{equation*}
where $\phi_\lambda^{-1}\in \Hom_\Q(A^\vee, A)$ and $\hat{f}$ is the element of $\End_\Q(A^\vee)$ induced by $f$ \cite[pg 114]{BL}.

Let $\End^s A$ (respectively $\End^s_\mathbb{Q}A$) denote the subset of $\End A$ (respectively $\End_\mathbb{Q}X$) of endomorphisms of $A$ invariant under the Rosati involution with respect to $\lambda$ on $\End_\mathbb{Q}A$. Then there is isomorphism of groups 
\begin{equation}\label{eq: delta}\delta\colon NS(A)\rightarrow \End^s A\end{equation}
given by the restriction to $NS(A)$ of the map $ NS_\mathbb{Q}(A)\rightarrow \End^s_\mathbb{Q}A$ defined by $\lambda'\mapsto \phi_\lambda^{-1}\phi_{\lambda'}$ \cite[Proposition 5.2.1]{BL}. 

Lastly, recall that any polarized abelian variety is isogenous to a product of simple polarized abelian varieties and 
that this decomposition is unique up to isogeny \cite[Theorem 4.3.1]{BL}. Hence for any $A\in \mathcal{A}_g[n]$ we may write
\[A\sim \prod_{i=1}^k A_i^{n_i},\]
where the $A_i$ are simple non-isogenous abelian varieties and  the $n_i$,  $\dim A_i$, and $\End_{\Q}(A_i^{n_i})$ are all uniquely determined. We call the $A_i$ the \emph{simple factors} of $A$ (although these are determined only up to isogeny).

\begin{lem}\label{polarized isogeny lemma}
For $(A,\lambda)\in \mathcal{A}_g[n]$, consider the decomposition $A\sim \prod_{i=1}^k A_i^{n_i}$ of $A$ into simple non-isogenous abelian varieties.  Suppose each $n_i=1$ and $\End_\Q(A_i)$ is 
 either $\mathbb{Q}$ or an imaginary quadratic field.  Then every isogeny class of $(A,\lambda)$ is a polarized isogeny class and hence if $(A,\lambda)\sim (B,\mu)$ for some $(B,\mu) \in \mathcal{A}_g[n]$, then there is an element $\alpha \in Sp(2g,\Q)$ such that $\alpha (A,\lambda)=(B,\mu)$.
\end{lem}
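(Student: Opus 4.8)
The plan is to reduce the statement to a lattice-theoretic assertion about the Néron--Severi group of $A$, and then to exploit the hypothesis on the endomorphism algebra to control that lattice. Concretely: given an isogeny $f\colon (A,\lambda)\to (B,\mu)$, I want to produce an \emph{auto}-isogeny of $A$ that transports $\mu$ (pulled back via $f$) to a rational multiple of $\lambda$, which is exactly what is needed to replace $f$ by a polarized isogeny $A\to B$; such a polarized isogeny between principally polarized abelian varieties with level-$n$ structure is precisely a point of $\Gamma_\alpha\backslash\mathbb{H}_g$ for some $\alpha\in Sp(2g,\Q)$, giving the Hecke-translate conclusion $\alpha(A,\lambda)=(B,\mu)$. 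So the heart of the matter is: for any polarization $\mu'$ on $A$ (here $\mu' = \frac{1}{\deg}f^*\mu$ up to scaling), there exists $\psi\in\End_\Q(A)^\times$ with $\psi^*\mu' \in \Q_{>0}\cdot\lambda$, equivalently $\delta(\mu')$ and $\delta(\lambda)=1$ differ by the action of a unit under the Rosati involution $f\mapsto \psi^\dagger\psi$.

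First I would set up the correspondence $\delta\colon NS_\Q(A)\xrightarrow{\sim}\End^s_\Q(A)$ from \eqref{eq: delta}, normalized so that $\delta(\lambda)=1$; then a class $\mu'\in NS_\Q(A)$ is a (rational) polarization iff $\delta(\mu')$ is a totally positive symmetric element, and $\psi^*\mu'$ corresponds to $\psi^\dagger\,\delta(\mu')\,\psi$ where $\dagger$ is the Rosati involution. Next I would use the hypothesis: since $A\sim A_1\times\cdots\times A_k$ with the $A_i$ simple, pairwise non-isogenous, and $\End_\Q(A_i)$ equal to $\Q$ or an imaginary quadratic field, we get $\End_\Q(A)\cong\prod_i \End_\Q(A_i)$, a product of fields each of which is its own center and each of dimension $\le 2$ over $\Q$. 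Then $\End^s_\Q(A)$ decomposes compatibly as a product of the Rosati-fixed subspaces of the factors: for an $\End_\Q(A_i)=\Q$ factor this is all of $\Q$, and for an imaginary quadratic factor $L_i$ the Rosati involution restricts to complex conjugation, so the fixed subspace is $\Q$ again. Hence $\End^s_\Q(A)\cong\Q^k$, one-dimensional in each factor, and a symmetric element is totally positive iff each of its $k$ coordinates is positive.

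The key computation is then that $\End_\Q(A)^\times$ acts transitively (up to positive scalars) on the totally positive elements of $\End^s_\Q(A)\cong\Q^k$: given $\psi=(\psi_1,\dots,\psi_k)$ with $\psi_i\in\End_\Q(A_i)^\times$, the map $\psi^\dagger\psi$ sends a symmetric element with coordinates $(c_1,\dots,c_k)$ to one with coordinates $(N_i(\psi_i)c_i)_i$, where $N_i$ is the identity on a $\Q$-factor and the norm $L_i\to\Q$ on an imaginary quadratic factor --- and in the latter case $N_i$ takes only positive values but hits a dense (in fact Zariski-dense, and for our purposes arbitrarily prescribable up to positive rationals, after clearing denominators) set of positive rationals, while on a $\Q$-factor $N_i(\psi_i)=\psi_i^2$ is an arbitrary positive square. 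This is not literally transitive on the nose --- the norm form of an imaginary quadratic field need not be surjective onto $\Q_{>0}$ --- so the honest statement is that $\delta(\mu')$ and $\delta(\lambda)$ become equal after scaling by a suitable positive rational and applying such a $\psi$; since rescaling a polarization by a positive rational does not change the point of $\mathcal{A}_g[n]$ and amounts to composing the isogeny with multiplication by an integer, this suffices. I expect the main obstacle to be exactly this point: bookkeeping the difference between isogeny and \emph{polarized} isogeny, i.e. verifying carefully that the failure of the norm forms to be surjective is harmless because we are always free to rescale polarizations and compose isogenies with multiplication-by-$N$ maps. Once $\psi$ is produced, I would conclude by noting that $f\circ\psi$ (suitably scaled) is a polarized isogeny $(A,\lambda)\to(B,\mu)$, hence $Q=(B,\mu)$ is a Hecke translate of $P=(A,\lambda)$, i.e. $\alpha(A,\lambda)=(B,\mu)$ for the corresponding $\alpha\in Sp(2g,\Q)$, using the identification of Hecke correspondences on $\mathcal{A}_g[n]$ with polarized isogenies recalled at the start of this section.
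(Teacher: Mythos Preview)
Your strategy is exactly the paper's: identify $\End_\Q^s(A)\cong\Q^k$ from the fact that each factor $L_i$ of $\End_\Q(A)=\prod_i L_i$ is either $\Q$ or imaginary quadratic (so its Rosati-fixed part is $\Q$), write $\delta(f^*\mu)=(n_1,\dots,n_k)$, and then precompose $f$ with some $\psi\in\End_\Q(A)^\times$ to make all the $n_i$ equal. The paper carries this out in a single sentence (``precomposing $f$ with the necessary elements of $\End A_i$, we may scale as needed''); your write-up is the same argument with more detail.

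Where you go beyond the paper is in noticing that precomposition by $\psi=(\psi_1,\dots,\psi_k)$ scales the $i$-th coordinate only by $N_{L_i/\Q}(\psi_i)$, which is a rational square when $L_i=\Q$ and a norm when $L_i$ is imaginary quadratic, and that neither exhausts $\Q_{>0}$. That observation is correct, and it is a point the paper's proof passes over. However, your proposed fix does not close the gap: composing with $[N]$, or equivalently rescaling the polarization globally, multiplies \emph{every} coordinate by the same square $N^2$, so the ratios $n_i/n_j$ are unchanged. For two factors with $L_i=L_j=\Q$ the obstruction to equalizing is exactly the class of $n_i/n_j$ in $\Q^\times/(\Q^\times)^2$, and nothing in your argument (or the paper's) forces this to be trivial. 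Concretely, take $A=E_1\times E_2$ with the $E_i$ non-CM and non-isogenous, $B=E_1'\times E_2$ with $E_1'=E_1/C$ for a subgroup $C$ of order~$2$, both with the product principal polarization; then every isogeny $A\to B$ is of the form $(af_0,b)$ and has $\delta(f^*\mu)=(2a^2,b^2)$, which no rational $a,b$ can equalize. So the ``honest statement'' you aim for --- that $\delta(\mu')$ and $\delta(\lambda)$ agree after a global scalar and a $\psi$ --- is not available in general under the stated hypotheses, and the step ``this suffices'' is where the argument breaks.
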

 
\begin{proof}
Under the hypotheses of the Lemma, we have that $L=\End_\Q(A)$ decomposes as a product $L=\prod_{i=1}^kL_i$, where each $L_i$ is either $\mathbb{Q}$ or an imaginary quadratic field. In particular the subset of $L$ fixed by the Rosati involution is $L^s=\prod_{i=1}^k\mathbb{Q}$. It follows that the map $\delta$ defined in Equation \eqref{eq: delta} above yields an isomorphism of groups $NS(A)\cong \mathbb{Z}^k$.

Suppose $(B,\mu)\in \mathcal{A}_g[n]$ is isogenous to $(A,\lambda)$ and let $f\colon A\rightarrow B$ be an isogeny. Then $f^*\mu$ is a polarization of $A$ and so may be viewed as a tuple of integers $\prod_{i=1}^k n_i$. Precomposing the isogeny $f$ with the necessary elements of $\End A_i$, we may scale as needed to ensure that all of the integers $n_i$ are equal. This new isogeny $f'\colon A\rightarrow B$ is thus a polarized isogeny. 
\end{proof}


\section{The subset $\Gamma\subset Sp(2g,\Q)$ and $\Adec$}\label{sec: Gamma}
Let us fix a Shimura variety $\mathcal{Z}=\mathcal{A}_{g_1}[n]\times \cdots \times \mathcal{A}_{g_r}[n]$
in  $\mathcal{A}_g[n]$,
where  $\sum_{i=1}^rg_i=g$. The main result of this section will be Corollary \ref{main cor}, which shows that if $\alpha \in Sp(2g,\Q)$ is general, more precisely if $\alpha$ lies outside of some codimension $2$ subvariety of $Sp(2g,\Q)$, then $\alpha \mathcal{Z}\cap \Adec\subset \Amdec$. The above codimension $2$ subvariety will be defined in terms of elements of $Sp(2g,\Q)$ which send a fixed product of elliptic curves into the decomposable locus $\Adec$.

Let $\mathcal{P}$ denote the set of proper partitions of the set $\{1, \ldots, g\}$. For each $\lambda\in \mathcal{P}$, we may write $\lambda$ in terms of its components as $\lambda=(\lambda_1,\ldots, \lambda_s)$. The term ``proper'' then just excludes the partition  $(1\cdots g)\coloneqq (\{1\},\ldots, \{g\})$. 
For any partition $\lambda=(\lambda_1,\ldots, \lambda_s)$ in $\mathcal{P}$, let $\ell_i(\lambda)$ denote the length of the component $\lambda_i$. Hence for any $\lambda \in \mathcal{P}$ we have $\sum_{i=1}^s\ell_i(\lambda)=g$. 

Then define for any $\lambda\in \mathcal{P}$ the set
\[\Adeclam=\{ A=A_1\times \cdots \times A_s \in \Adec\mid \dim A_i=\ell_i(\lambda) \ \forall 1\le i\le s\}.\]
Note that 
\[\Adec=\bigcup_{\lambda\in \mathcal{P}}\Adeclam.\]

Let $\lambda_\mathcal{Z}$ be the partition corresponding to the Shimura variety $\mathcal{Z}$, so that $\ell_i(\lambda_{\mathcal{Z}})=g_i$ and
\[\mathcal{Z}=\mathcal{A}_{g,\lambda_\mathcal{Z}}^{\mathrm{dec}}[n].\]

Observe that if $\delta\in \mathcal{P}$ is a refinement of the partition $\lambda_{\mathcal{Z}}$, then the Shimura variety $\mathcal{Z}$ contains $\mathcal{A}_{g,\delta}^{\mathrm{dec}}[n]$ but also contains many other irreducible subvarieties isomorphic to $\mathcal{A}_{g,\delta}^{\mathrm{dec}}[n]$. Each of these subvarieties is itself a Shimura variety parametrizing principally polarized abelian varieties with a fixed decomposition. For instance consider the case $g=3$ and $\lambda_{\mathcal{Z}}=(1,23)$, so that $\mathcal{Z}= \mathcal{A}_1[n]\times \mathcal{A}_2[n]$. 
If $\delta=(1,2,3)$, then $\mathcal{Z}$ contains $\mathcal{A}_{3,\delta}^{\mathrm{dec}}[n]= \mathcal{A}_1[n]\times \mathcal{A}_1[n]\times \mathcal{A}_1[n]$ parametrizing threefolds which decompose as  a product of elliptic curves $E_1\times E_2\times E_3$, but also contains many other irreducible subvarieties isomorphic to $\mathcal{A}_{3,\delta}^{\mathrm{dec}}[n]$ whose points are abelian threefolds of the form $E_1\times S$, where $S$ is an indecomposable abelian surface isogenous to a product of elliptic curves $E_2\times E_3$. 

In the lemma below, we wish to understand when a Hecke translate of such a subvariety isomorphic to $\mathcal{A}_{g,\delta}^{\mathrm{dec}}[n]$ is contained in the decomposable locus $\Adec$.

\begin{lem}\label{lem: Shimura Hecke}
Let $\delta=(\delta_1,\ldots, \delta_t)\in \mathcal{P}$ and let $\mathcal{Y}_\delta \cong \mathcal{A}_{g,\delta}^{\mathrm{dec}}[n]$ be a Shimura variety such that each point of $\mathcal{Y}_\delta$ is isogenous to a point in $\mathcal{A}_{g,\delta}^{\mathrm{dec}}[n]$. Suppose $P\in \mathcal{Y}_\delta$ satisfies $P\sim A_1\times \cdots \times A_t\in \mathcal{A}_{g,\delta}^{\mathrm{dec}}[n]$ with each $A_i\in \mathcal{A}_{\ell_i(\delta)}$ simple. 
If $\alpha\in Sp(2g,\Q)$ is such that $\alpha P\in \Adeclam$ for some partition $\lambda\in \mathcal{P}$, then $\alpha \mathcal{Y}_\delta \subset \Adeclam$
\end{lem}

\begin{proof}
By the definition of $\mathcal{Y}_\delta$, we know $\mathcal{Y}_\delta \cong \prod_{i=1}^t\mathcal{A}_{\ell_i(\delta)}[n]$. Recall from Section \ref{section 1} that each $\mathcal{A}_{\ell_i(\delta)}[n]$ is the quotient $\ker(Sp(2\ell_i(\delta),\Z)\rightarrow Sp(2\ell_i(\delta), \Z/n\Z))\backslash\mathbb{H}_{\ell_i(\delta)}$. Each of the Siegel upper half-spaces $\mathbb{H}_{\ell_i(\delta)}$ is the Hermitian symmetric space $Sp(2\ell_i(\delta), \mathbb{R})/U(\ell_i(\delta))$ and thus, in particular, is equipped with a transitive action by $Sp(2\ell_i(\delta), \mathbb{R})$. 
Hence both $\mathcal{Y}_\delta$ and its Hecke translate $\alpha \mathcal{Y}_\delta$ are equipped with a transitive action by the group $\prod_{i=1}^tSp(2\ell_i(\delta), \mathbb{R})$. 

Thus let $Q\in  \mathcal{Y}_\delta$ and consider $\alpha Q \in \alpha \mathcal{Y}_\delta$. Then via the above transitive action on $\alpha  \mathcal{Y}_\delta$, there exists an element $M\in \prod_{i=1}^tSp(2\ell_i(\delta), \mathbb{R})$ such that $M \alpha P=\alpha Q$. 

However, we also know that $\alpha P\in \Adeclam$. This $\Adeclam$ is isomorphic to $\prod_{j=1}^s\mathcal{A}_{\ell_j(\lambda)}[n]$ and thus is equipped with a transitive action by the group $\prod_{j=1}^sSp(2\ell_j(\lambda), \mathbb{R})$.

Moreover, observe that since $P\sim A_1\times \cdots \times A_t\in \mathcal{A}_{g,\delta}^{\mathrm{dec}}[n]$ with each $A_i\in \mathcal{A}_{\ell_i(\delta)}$ simple and $\alpha P\in \Adeclam$, it follows that the partition $\delta$ is a refinement of the partition $\lambda$ and that this refinement induces a natural  embedding $\prod_{i=1}^tSp(2\ell_i(\delta), \mathbb{R})\hookrightarrow \prod_{j=1}^sSp(2\ell_j(\lambda), \mathbb{R})$ compatible with the respective group actions on $\alpha \mathcal{Y}_\delta$ and $\Adeclam$. Thus, via this embedding, it follows that $M$ is an element of $\prod_{j=1}^sSp(2\ell_j(\lambda), \mathbb{R})$ acting on $\Adeclam=\prod_{j=1}^sSp(2\ell_j(\lambda), \mathbb{R})/\prod_{j=1}^sU(\ell_j(\lambda))$. Then since $\alpha P \in \Adeclam$, it follows that $M\alpha P=\alpha Q$ is an element of $\Adeclam$. We have thus shown that indeed $\alpha  \mathcal{Y}_\delta\subset \Adeclam$.

\end{proof}

Now let us fix a product of pairwise non-isogenous elliptic curves in $\mathcal{A}_g[n]$
\[P_0=E_1\times \cdots \times E_g.\]
In particular,  point $P_0$ of $\mathcal{A}_g[n]$ is a point of the Shimura variety $\mathcal{Z}$.  Consider the subset of $Sp(2g,\Q)$ given by
\[\Gamma=\{\alpha \in Sp(2g,\Q)\mid \alpha P_0\in \Adec\}.\]
For any partition $\lambda \in \mathcal{P}$, define a subset of $\Gamma$ by
\[\Gamma_\lambda=\{\alpha\in \Gamma \mid \alpha P_0\in \Adeclam\}.\]
It then follows that 
\[\Gamma=\bigcup_{\lambda\in \mathcal{P}} \Gamma_\lambda.\]

Recall that for any $g\ge 1$, the Siegel upper half-space $\mathbb{H}_g$ is equipped with a transitive action by $Sp(2g,\mathbb{R})$ such that the stabilizer of a point is $U(g,\mathbb{R})$, so that $\mathbb{H}_g=Sp(2g,\mathbb{R})/U(g,\mathbb{R})$. 

\begin{lem}\label{group cor}
For any partition $\lambda=(\lambda_1,\ldots, \lambda_s)$ in $\mathcal{P}$, there is an isomorphism 
\[\Gamma_\lambda\cong 
\left(\prod_{i=1}^s Sp(2\ell_i(\lambda),\mathbb{Q})\right)\cdot 
U(g,\Q).\]
\end{lem}

\begin{proof}
Observe that for the point $P_0= E_1\times \cdots \times E_g$, if $\alpha \in \prod_{i=1}^s Sp(2\ell_i(\lambda),\mathbb{Q})$, then $\alpha P_0\in \Adeclam$. Hence for any $\alpha \in \prod_{i=1}^s Sp(2\ell_i,\mathbb{Q})$ and $u\in U(g,\mathbb{Q})$, the point $\alpha u P_0=\alpha P_0$ lies in $\Adeclam$. It follows that there is an embedding
\[f\colon\left(\prod_{i=1}^s Sp(2\ell_i(\lambda),\mathbb{Q})\right)\cdot U(g,\mathbb{Q})\rightarrow \Gamma_\lambda\]
given by the product of the natural embeddings $\prod_{i=1}^s Sp(2\ell_i(\lambda),\mathbb{Q})\hookrightarrow Sp(2g,\Q)$ and $U(g,\Q)\hookrightarrow Sp(2g,\Q)$.
To see that $f$ is surjective, note that by Lemma \ref{polarized isogeny lemma} together with the fact that all simple factors of $P_0$ have dimension $1$, for any $\alpha'\in \Gamma_\lambda$ there exists an $\alpha \in \prod_{i=1}^s Sp(2\ell_i(\lambda),\mathbb{Q})$ such that $\alpha P_0=\alpha'P_0$. Hence $\alpha^{-1}\alpha'$ lies in the stabilizer of the period point of $P_0$. Since $\alpha^{-1}$ and $\alpha'$ are both in $Sp(2g,\Q)$, it follows that  $\alpha^{-1}\alpha'\in U(g,\Q)$. Namely, there exists $u\in U(g,\Q)$ such that $\alpha'=\alpha u$ and thus $\alpha' \in \left(\prod_{i=1}^s Sp(2\ell_i(\lambda),\mathbb{Q})\right)\cdot U(g,\Q)$.

\end{proof}

\begin{lem} \label{main lem}
Consider a point $R=A_1\times \cdots \times A_r$ in $\mathcal{Z}-\Amdec$, where each $A_i\in \mathcal{A}_{g_i}[n]$. 
If $\alpha\in Sp(2g,\Q)$ 
is such that $\alpha R\in \mathcal{A}_{g,\delta}^{\mathrm{dec}}[n]$ for some $\delta\in \mathcal{P}$, 
then $\alpha \in \Gamma_\delta \cdot \prod_{i=1}^rSp(2\ell_i(\lambda_{\mathcal{Z}}),\Q)$.
\end{lem}

\begin{proof}
For each $A_i$ in the decomposition $R=A_1\times \cdots \times A_r$ we may write $A_i\sim \prod_{j=1}^{m_i}Y_{i,j}$, where all of the $Y_{i,j}$ are simple and pairwise non-isogenous. Let $g_{i,j}=\dim Y_{i,j}$ and let $\mathcal{Y}\subset \mathcal{Z}$ be the Shimura variety through $R$ parametrizing abelian varieties isogenous to a product $\prod_{i,j}Y'_{i,j}$, where $\dim Y'_{i,j}=g_{i,j}$ for all $i$ and $j$. 

Since $R\notin \Amdec$, the point $P_0$ is isogenous to a point of $\mathcal{Y}$. Hence by Lemma \ref{polarized isogeny lemma} there exists an element $\beta \in \prod_{i=1}^rSp(2\ell_i(\lambda_{\mathcal{Z}}),\Q)$ such that $\beta P_0 \in \mathcal{Y}$. Then since $\alpha R\in \mathcal{A}_{g,\delta}^{\mathrm{dec}}[n]$, Lemma \ref{lem: Shimura Hecke} implies that $\alpha \beta P_0\in \mathcal{A}_{g,\delta}^{\mathrm{dec}}[n]$. Hence $\alpha \beta\in \Gamma_\delta$. It follows that $\alpha \in \Gamma_\delta \cdot \prod_{i=1}^rSp(2\ell_i(\lambda_{\mathcal{Z}}),\Q)$.
\end{proof}

\begin{rem}
Note that the proof of Lemma \ref{main lem} crucially uses the assumption that $R\notin \Amdec$ to conclude that $P_0$ is isogenous to a point of $\mathcal{Y}$. For instance, suppose that $R$ was isogenous to a product $E_u\times E_v^2$ for some elliptic curves $E_u$ and $E_v$. Then although the point $R$ lies on a Shimura variety isomorphic to $\mathcal{A}_1[n]\times \mathcal{A}_1[n]$ whose points $P$ all have the form $P\sim E_x\times E_y^2$ with $E_x$ and $E_y$ elliptic curves, there is not necessarily a Shimura variety passing through $R$ whose general point is isogenous to a product of the form  $E_x\times E_y\times E_z$ where $E_x$, $E_y$, and $E_z$ are pairwise non-isogenous elliptic curves. 
\end{rem}

Lemma \ref{main lem} implies that if $\alpha \in Sp(2g,\Q)$ is not contained in any $\Gamma_\delta \cdot \prod_{i=1}^rSp(2\ell_i(\lambda_{\mathcal{Z}}),\Q)$ for $\delta \in \mathcal{P}$,  then $\alpha \mathcal{Z}\cap \Adec \subset \Amdec$. Moreover, we have already established in Lemma \ref{lem dim1} that $\mathcal{Z}\cap \Amdec$ has high codimension in $\mathcal{Z}$ and hence that the same is true of $\alpha\mathcal{Z}\cap \Amdec$ in $\alpha \mathcal{Z}$. 

Thus it remains to verify that such $\alpha \in Sp(2g,\Q)$ exist. To show this we will compute the $\Q$-dimension of 
\[
\Gamma\cdot \prod_{i=1}^rSp(2\ell_i(\lambda_{\mathcal{Z}}),\Q)=\bigcup_{\delta \in \mathcal{P}}\Gamma_\delta \cdot \prod_{i=1}^rSp(2\ell_i(\lambda_{\mathcal{Z}}),\Q)
\]
 as a subvariety of $Sp(2g,\Q)$. 

\begin{lem} \label{dim lemma}
For the partitions $\delta\in \mathcal{P}$ we have
 \[\max_{\delta\in \mathcal{P}}\dim_\Q \left(\Gamma_\delta \cdot \prod_{i=1}^rSp(2\ell_i(\lambda_{\mathcal{Z}}),\Q)\right)=
 2g^2+g-2.\] 
\end{lem}

\begin{proof}
We proceed by induction on $g\ge 2$. If $g=2$, then $\delta=\lambda_{\mathcal{Z}}=(1,1)$. Hence $\Gamma_\delta \cdot \prod_{i=1}^rSp(2\ell_i(\lambda_{\mathcal{Z}}),\Q)=(SL(2,\Q)\times SL(2,\Q))\cdot U(2,\Q)$,  has $\Q$-dimension 
$3+3+4-2=8$, which is indeed equal to $2\cdot 2^2+2-2$.

Now consider partitions  $\lambda_{\mathcal{Z}}=(\lambda_1,\ldots, \lambda_r)$ and $\delta=(\delta_1,\ldots, \delta_t)$ of the set $\{1,\ldots, g\}$. Observe that $\lambda_{\mathcal{Z}}$ is obtained from a partition $\lambda_{\mathcal{Z}}'$ of the set $\{1,\ldots, g-1\}$ by adjoining the element $g$ to the component of $\lambda_{\mathcal{Z}}'$ in the $j$-th position, for some $1\le j\le r$. Similarly $\delta$ is obtained from a partition $\delta'$ of the set $\{1,\ldots, g-1\}$ by adjoining the element $g$ to the component of $\delta'$ in the $k$-th position, for some $1\le k\le t$ (where we may have $\ell_r(\lambda_{\mathcal{Z}}')=0$ or $\ell_t(\delta')=0$). 

Note that for any $m\ge 1$, we have $\dim_\Q Sp(2(m+1),\Q)-\dim_\Q Sp(2m,\Q)=4m+3$ and $\dim_\Q U(m+1,\Q)-\dim_\Q U(m,\Q)=2m+1$. Hence 
\begin{equation}\label{eq: dim prod Sp}\dim_\Q \left( \prod_{i=1}^rSp(2\ell_i(\lambda_{\mathcal{Z}}),\Q)\right)=\dim_\Q\left( \prod_{i=1}^rSp(2\ell_i(\lambda_{\mathcal{Z}}'),\Q)\right) + 4\ell_j(\lambda_{\mathcal{Z}}')+3
\end{equation}
and it follows from Lemma \ref{group cor} that

\begin{equation}
\begin{aligned}\label{eq: dim Gamma}
\dim_\Q\Gamma_\delta&=\dim_\Q \left( \prod_{i=1}^tSp(2\ell_i(\delta),\Q)\right) + \dim_\Q U(g,\Q) -\dim_\Q \left(\prod_{i=1}^tU(\ell_i(\delta),\Q)\right)\\
&= \dim_\Q\left(\prod_{i=1}^tSp(2\ell_i(\delta'),\Q)\right) + 4\ell_k(\delta')+3+ g^2-\dim_\Q\left( \prod_{i=1}^tU(\ell_i(\delta'),\Q)\right)-2\ell_k(\delta')-1\\
&= \dim_\Q\left(\prod_{i=1}^tSp(2\ell_i(\delta'),\Q)\right) + (g-1)^2-\dim_\Q\left( \prod_{i=1}^tU(\ell_i(\delta'),\Q)\right)+2\ell_k(\delta')+2g+1\\
&=\dim_\Q\Gamma_{\delta'} + 2\ell_k(\delta')+2g+1.
\end{aligned}
\end{equation}

Now let $D_{jk}$ denote the set of elements of $\{1,\ldots, g-1\}$ occurring in both the $j$-th position of $\lambda_{\mathcal{Z}}'$ and the $k$-th position of $\delta'$. Let $d_{jk}$ be the cardinality of the set $D_{jk}$. Then since the element $g$ is added to both the $j$-th position of $\lambda_{\mathcal{Z}}'$ and the $k$-th position of $\delta'$ when forming $\lambda_{\mathcal{Z}}$ and $\delta$, we obtain
\begin{equation}\label{eq: dim intersection}
\begin{aligned}
\dim_\Q \left(\Gamma_\delta\cap \prod_{i=1}^rSp(2\ell_i(\lambda_{\mathcal{Z}}),\Q)
 \right)
 &=&\left(\dim_\Q\left(\prod_{i=1}^tSp(2\ell_i(\delta'),\Q)\cap \prod_{i=1}^rSp(2\ell_i(\lambda_{\mathcal{Z}}'),\Q)\right)+4d_{jk}+3\right)\\
  & &+ \left(\dim_\Q \prod_{i=1}^rU(\ell_i(\lambda_{\mathcal{Z}}',\Q) + 2\ell_j(\lambda_{\mathcal{Z}}')+1\right)\\
  & & - \left(\dim_\Q\left(\prod_{i=1}^tU(\ell_i(\delta'),\Q)\cap \prod_{i=1}^rSp(2\ell_i(\lambda_{\mathcal{Z}}'),\Q)\right)+2d_{jk}+1\right)\\
 &=&\dim_\Q\left(\Gamma_{\delta'}\cap  \prod_{i=1}^rSp(2\ell_i(\lambda_{\mathcal{Z}}'),\Q) \right)+ 2d_{jk} + 2\ell_j(\lambda_{\mathcal{Z}}')+ 3
 \end{aligned}
 \end{equation}
 Combining \eqref{eq: dim prod Sp}, \eqref{eq: dim Gamma}, and \eqref{eq: dim intersection}, it follows that
 \[
 \begin{aligned}
 \dim_\Q \left( \Gamma_\delta\cdot \prod_{i=1}^rSp(2\ell_i(\lambda_{\mathcal{Z}}),\Q)\right)&=&\dim_\Q\Gamma_{\delta'} + 2\ell_k(\delta')+2g+1\\
  & & + \dim_\Q\left( \prod_{i=1}^rSp(2\ell_i(\lambda_{\mathcal{Z}}'),\Q)\right) + 4\ell_j(\lambda_{\mathcal{Z}}')+3\\
  & & -\dim_\Q\left(\Gamma_{\delta'}\cap  \prod_{i=1}^rSp(2\ell_i(\lambda_{\mathcal{Z}}'),\Q) \right)+ 2d_{jk} + 2\ell_j(\lambda_{\mathcal{Z}}')+ 3\\
  &=& \dim_\Q \left(\Gamma_{\delta'}\cdot  \prod_{i=1}^rSp(2\ell_i(\lambda_{\mathcal{Z}}'),\Q)\right) + 2(g+\ell_j(\lambda_{\mathcal{Z}}') + \ell_k(\delta')-d_{jk})+1
   \end{aligned}
 \]

 By the pigeonhole principle, the maximum value across all $\delta\in \mathcal{P}$ that $\ell_j(\lambda_{\mathcal{Z}}') + \ell_k(\delta')-d_{jk}$ attains is $g-1$. Thus by induction we obtain
\begin{equation*}
\begin{aligned}
 \max_{\delta\in \mathcal{P}}\dim_\Q \left( \Gamma_\delta\cdot \prod_{i=1}^rSp(2\ell_i(\lambda_{\mathcal{Z}}),\Q)\right)&=\max_{\delta\in \mathcal{P}} \dim_\Q \left(\Gamma_{\delta'}\cdot  \prod_{i=1}^rSp(2\ell_i(\lambda_{\mathcal{Z}}'),\Q)\right) + 2(g+g-1)+1
\\
&= 2(g-1)^2+(g-1)-2+4g-1\\
&=2g^2+g-2.
\end{aligned}
\end{equation*}

\end{proof}

In particular, we obtain as a consequence

\begin{cor}\label{main cor}
Fix a Shimura variety $\mathcal{Z}=\mathcal{A}_{g_1}[n]\times \cdots \times \mathcal{A}_{g_r}[n]$
in  $\mathcal{A}_g[n]$,
where  $\sum_{i=1}^rg_i=g$. Then for any general $\alpha\in Sp(2g,\Q)$ we have
\[\alpha\mathcal{Z}\cap \Adec \subset \Amdec.\]
\end{cor}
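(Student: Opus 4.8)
The plan is to deduce Corollary \ref{main cor} from the dimension bound in Lemma \ref{dim lemma} by a standard fiber-dimension argument on the Hecke correspondence. First I would observe that $\alpha \mathcal{Z}\cap \Adec$ decomposes as a union over partitions $\lambda$ of $\{1,\dots,g\}$, so it suffices to show that for general $\alpha$ the intersection $\alpha \mathcal{Z}\cap \Adeclam$ is contained in $\Amdec$ for every $\lambda$; equivalently, any point $R\in\mathcal{Z}$ with $R\notin\Amdec$ and $\alpha R\in\Adec$ forces $\alpha$ to lie in a small locus. By Lemma \ref{main lem}, if $R\in\mathcal{Z}$ is not in $\Amdec$ and $\alpha R\in\Adec$, then $\alpha\in\Gamma\Gamma_{\lambda_0}$, where $\lambda_0$ is the partition coming from the fixed decomposition of $\mathcal{Z}$. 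Since $\Gamma=\bigcup_{\mu\in\mathcal{P}}\Gamma_\mu$ is a finite union, this says $\alpha$ belongs to the finite union $\bigcup_{\mu\in\mathcal{P}}\Gamma_\mu\Gamma_{\lambda_0}$.

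Next I would bound the dimension of each $\Gamma_\mu\Gamma_{\lambda_0}$. By Lemma \ref{dim lemma}, $\dim_\Q \Gamma_\mu\Gamma_{\lambda_0}\le 2g^2+g-4$, which is strictly less than $\dim_\Q Sp(2g,\Q)=2g^2+g$. Hence the ``bad set'' $\Sigma\coloneqq\bigcup_{\mu\in\mathcal{P}}\Gamma_\mu\Gamma_{\lambda_0}$ is a finite union of proper closed subvarieties of $Sp(2g,\Q)$, hence a proper closed subvariety. So for any $\alpha\in Sp(2g,\Q)\setminus\Sigma$ — i.e.\ for general $\alpha$ — there is \emph{no} point $R\in\mathcal{Z}\setminus\Amdec$ with $\alpha R\in\Adec$. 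In other words every point of $\alpha\mathcal{Z}\cap\Adec$ that comes from a point of $\mathcal{Z}$ outside $\Amdec$ is excluded, which is precisely the statement $\alpha\mathcal{Z}\cap\Adec\subset\alpha(\mathcal{Z}\cap\Amdec)$. Since $\Amdec$ is defined by the intrinsic (isogeny-invariant) property of having a repeated simple factor, Hecke translation preserves it, so $\alpha(\mathcal{Z}\cap\Amdec)\subset\Amdec$ and we conclude $\alpha\mathcal{Z}\cap\Adec\subset\Amdec$.

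The one subtlety I would be careful about is the precise meaning of ``general $\alpha$'' and of the Hecke translate $\alpha\mathcal{Z}$: a priori $Sp(2g,\Q)$ is not a variety with a generic point in the naive sense, but one works inside the real or adelic points, or simply notes that $\Sigma$ misses a Zariski-dense (indeed nonempty Zariski-open in the appropriate algebraic group sense) set of elements, and that for such $\alpha$ every irreducible component of $q_{\alpha}(q^{-1}\mathcal{Z})$ has the claimed property. I would also double-check that the containment $\alpha\mathcal{Z}\cap\Adec\subset\alpha(\mathcal{Z}\cap\Amdec)$ is interpreted correctly at the level of Hecke translates: if $Q\in\alpha\mathcal{Z}\cap\Adec$, then $Q=\alpha R$ for some $R\in\mathcal{Z}$ (after passing through the correspondence diagram), and the argument above applies to this $R$.

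The main obstacle is essentially bookkeeping rather than a genuine difficulty: one must make sure Lemma \ref{main lem} applies uniformly to \emph{all} points $R\in\mathcal{Z}\setminus\Amdec$ with the \emph{same} partition $\lambda_0$ (which it does, since $\lambda_0$ depends only on $\mathcal{Z}$, not on $R$), and that the finiteness of $\mathcal{P}$ is what turns the pointwise statement ``$\alpha\in\Gamma\Gamma_{\lambda_0}$'' into the global statement ``$\alpha$ lies in a proper closed subvariety.'' Everything else is the dimension count already established in Lemma \ref{dim lemma}.
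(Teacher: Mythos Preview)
Your argument is correct and follows essentially the same route as the paper: apply Lemma~\ref{main lem} to any $R\in\mathcal{Z}\setminus\Amdec$ with $\alpha R\in\Adec$ to force $\alpha\in\Gamma\Gamma_{\lambda_0}$, then use Lemma~\ref{dim lemma} to see this set is a proper subvariety of $Sp(2g,\Q)$, so a general $\alpha$ avoids it. Your version is in fact slightly more careful than the paper's in making explicit the finite decomposition $\Gamma=\bigcup_{\mu\in\mathcal{P}}\Gamma_\mu$ (needed to conclude closedness) and the isogeny-invariance of $\Amdec$ (needed for the final containment $\alpha P\in\Amdec$).
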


\begin{proof}
Let $R\in \mathcal{Z}$ such that $\alpha R\in \Adec$. 
 If $R$ is not contained in $ \Amdec$, then by Lemma \ref{main lem} we have $\alpha \in \Gamma \cdot \prod_{i=1}^rSp(2\ell_i(\lambda_{\mathcal{Z}}),\Q)$. However since $\dim_\Q Sp(2g,\Q)=2g^2+g$, it follows from Lemma \ref{dim lemma} that $ \Gamma \cdot \prod_{i=1}^rSp(2\ell_i(\lambda_{\mathcal{Z}}),\Q)$ is a proper subvariety of $Sp(2g,\Q)$ of codimension at least $2$, which contradicts the assumption that $\alpha$ is general. Therefore, we must have $R\in  \Amdec$ and so $\alpha R\in \Amdec$.
\end{proof}


\section{The Mumford-Tate group and connected monodromy}\label{section MT}
A $\Z$- (respectively $\Q$-) Hodge structure $V$ is an abelian group (respectively a $\Q$-vector space) such that $V_\C\coloneqq V\otimes_\Z \C$ (respectively $V_\C\coloneqq V\otimes_\Q \C$) comes 
equipped with a decomposition $V_\C=\oplus_{p,q}V^{p,q}$ such that $\overline{V}^{q,p}=V^{p,q}$. This bigrading determines and is determined by a homomorphism 
\[h\colon \mathbb{C}^*\rightarrow GL(V_\mathbb{R})\] given by $h(\lambda)v=\sum_{p,q}\lambda^q\overline{\lambda}^pv^{p,q}$. 

 The \emph{Mumford-Tate group} $MT(V)\subset GL(V)$ of either a $\Z$- or $\Q$-Hodge structure $V$ is the smallest $\Q$-algebraic subgroup of $GL(V)$ whose real points contain the image of $\mathbb{C}^*$ under the homomorphism $h$. 
It is sometimes more convenient to work with the \emph{special Mumford-Tate group} $SMT(V)$ (also known as the \emph{Hodge group} $Hg(V)$) which is the identity component of $SL(V)\cap MT(V)$. In other words $SMT(V)$ is the smallest $\Q$-algebraic  subgroup of $SL(V)$ containing the real points of the image of the unit circle $U(1)$. 
For more on Mumford-Tate groups see for instance \cite{moonenmt}.

Recall that the \emph{derived group} $\mathcal{D}G$ of an algebraic group $G$ over a field $k$ is the intersection of the normal algebraic subgroups $N$ of $G$ such that $G/N$ is commutative. In what follows, we will consider the derived group $\mathcal{D}MT(V)$ of the Mumford-Tate group of a Hodge structure $V$.  

Given a variation of polarizable $\Z$-  (respectively  $\Q$-) Hodge structures $\mathbb{V}$ over a connected algebraic variety $B$, every fiber $\mathbb{V}_b$ is a $\Z$- (respectively $\Q$-) Hodge structure and thus has a Mumford-Tate group $MT(\mathbb{V}_b)$. Moreover, for $b\in B$ very general consider the monodromy representation
\[\Phi\colon \pi_1(B,b)\rightarrow  GL(\mathbb{V}_b\otimes \Q).\]

\begin{defn}The \emph{connected monodromy group} $T(\mathbb{V})$ of $\mathbb{V}$ is the connected component of the identity of the $\Q$-Zariski closure of the image of $\Phi$. In other words $T$ is the smallest $\Q$-algebraic subgroup of $GL(\mathbb{V}_b)$ containing the image of $\Phi$.
\end{defn}

Note that there is an equivalence of categories between the category of complex abelian varieties and the category of polarizable torsion-free $\Z$-Hodge structures of type $(-1,0), (0,-1)$, meaning that $V$ satisfies $V_\C\cong V^{1,0}\oplus V^{0,1}$. This equivalence is given by the functor sending a complex abelian variety $A$ to $H^1(A,\Z)$ (See for instance \cite[Section 2.1]{moonenmt}). This equivalence then extends via $A\mapsto H^1(A,\Q)$ to an equivalence of categories between the category of complex abelian varieties up to isogeny and the category of polarizable $\Q$-Hodge structures of type $(-1,), (0,-1)$.
Hence suppose  $f\colon \mathscr{A}\rightarrow B$ is a family of complex abelian varieties on the connected algebraic variety $B$ and consider the variation of $\Q$-Hodge structures $\mathbb{V}$ given by $\mathbb{V}_b=H^1(A_b,\Q)$ for every $b\in B$, where $A_b=f^{-1}(b)$. We will then refer to the \emph{connected monodromy} $T(\mathscr{A})$ of the family $\mathscr{A}$ to mean the connected monodromy $T(\mathbb{V})$ of the variation of Hodge structures $\mathbb{V}$. 

Since the connected monodromy group only depends on the $\Q$-structure of the underlying variation of Hodge structures, for the purposes of computing connected monodromy we will only concern ourselves with $\Q$-variations of Hodge structures. However the above equivalences of categories are helpful in thinking about the distinction between a complex abelian variety being decomposable (meaning that its corresponding $\Z$-Hodge structure decomposes) and being non-simple (meaning that its corresponding $\Q$-Hodge structure decomposes). 

Now for any variation of $\Q$-Hodge structures $\mathbb{V}$ on a connected algebraic variety $B$, the Theorem of the Fixed Part (see \cite[Theorem 4.1.1]{deligneii}) implies that the monodromy-invariant classes $\mathbb{V}_b^\Phi$ on a single fiber are restrictions of classes on the total space and thus these classes arise from a sub-variation of Hodge structures $\mathbb{V}^\Phi$ on $B$. Since the category of polarizable variations of $\Q$-Hodge structures on a compactifiable base is semisimple, we then obtain a decomposition
\[\mathbb{V}=\mathbb{V}^\Phi \oplus \mathbb{V}_{\mathrm{var}},\]
where $\mathbb{V}_{\mathrm{var}}$ denotes the variation of $\Q$-Hodge structures on $B$ giving $\mathbb{V}_b/\mathbb{V}^\Phi_b$ on every $b\in B$. 
 Since the base $B$ of the variation $\mathbb{V}$ is a connected algebraic variety, the theorem of Andr\'e \cite[Theorem 1]{andre} then implies that, for $b\in B$ very general,  the connected monodromy group $T(\mathbb{V})$ satisfies
\begin{equation}\label{eq: andre}T(\mathbb{V}) \triangleleft \mathcal{D}MT((\mathbb{V}_{\mathrm{var}})_b),\end{equation}
where $\mathcal{D}MT((\mathbb{V}_{\mathrm{var}})_b)$ denotes the derived group of the fiber at $b$ of the $\Q$-variation of Hodge structures $\mathbb{V}_{\mathrm{var}}$. From this we obtain:

 \begin{prop}\label{bb prop}
Let $\mathcal{Z}$ be a subvariety of  $\mathcal{A}_g[n]$ and let $\mathcal{Y}\subset \mathcal{Z}$ be a subvariety of codimension at least $d\ge2$. Let $\mathbb{V}$ be the variation of $\Z$-Hodge structures on $\mathcal{Z}$ given by $V_z=H^1(A_z, \Z)$, where $A_z$ denotes the principally polarized abelian variety determined by $z\in \mathcal{Z}$. 

If the boundary $\mathcal{Z}^*\backslash \mathcal{Z}$ of a compactification $\mathcal{Z}^*$ of $\mathcal{Z}$ has codimension at least $d$ in $\mathcal{Z}^*$, then any general complete intersection of ample divisors on $\mathcal{Z}^*$ yields a $(d-1)$-dimensional smooth compact subvariety $S$ of $\mathcal{Z}\backslash \mathcal{Y}$ such that, for $s\in S$ very general, the connected monodromy group of the universal family of $S$ is a normal subgroup of $ \mathcal{D}MT((\mathbb{V}_{\mathrm{var}})_s)$. 
\end{prop}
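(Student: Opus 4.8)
The plan is to realize $S$ as a sufficiently general complete intersection of ample divisors in the projective compactification $\mathcal{Z}^*$, cut down to dimension $d-1$, and then to feed the restriction to $S$ of the universal abelian scheme into André's theorem \eqref{eq: andre}. Replacing the ample line bundles by sufficiently high powers we may assume they are very ample, so fix a closed embedding $\mathcal{Z}^*\hookrightarrow\mathbb{P}^N$ and put $n\coloneqq\dim\mathcal{Z}^*=\dim\mathcal{Z}$; since a connected Shimura variety is the quotient of a connected Hermitian symmetric domain — and all the $\mathcal{Z}$ to which we shall apply the proposition are of this form — we may assume $\mathcal{Z}$, hence $\mathcal{Z}^*$, is irreducible. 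Take $S=\mathcal{Z}^*\cap\Lambda$ for $\Lambda\subseteq\mathbb{P}^N$ a general linear subspace of codimension $n-d+1$.

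First I would verify the geometric assertions about $S$. Write $\partial\coloneqq\mathcal{Z}^*\backslash\mathcal{Z}$ and let $\overline{\mathcal{Y}}$ denote the closure of $\mathcal{Y}$ in $\mathcal{Z}^*$; by hypothesis $\dim\partial\le n-d$, and $\dim\overline{\mathcal{Y}}=\dim\mathcal{Y}\le n-d$ as well. For any fixed closed $W\subseteq\mathcal{Z}^*$ with $\dim W\le n-d$ and $\Lambda$ general, the intersection $S\cap W=W\cap\Lambda$ has dimension $\le\dim W-(n-d+1)\le-1$, hence is empty. Since the level is $n\ge3$ the space $\mathcal{A}_g[n]$, and therefore $\mathcal{Z}$, is smooth, so $\mathrm{Sing}(\mathcal{Z}^*)\subseteq\partial$; applying the emptiness just noted to $W=\partial$, $W=\overline{\mathcal{Y}}$ and $W=\mathrm{Sing}(\mathcal{Z}^*)$ gives $S\subseteq\mathcal{Z}\backslash\mathcal{Y}$ and $S\cap\mathrm{Sing}(\mathcal{Z}^*)=\emptyset$. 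Thus $S$ is a general linear section of the smooth quasi-projective variety $\mathcal{Z}$, so by Bertini's smoothness theorem it is smooth of pure dimension $d-1$; and since $d\ge2$, at every stage of the iterated hyperplane cut the variety being sliced has dimension $\ge d\ge2$, so Bertini's irreducibility (connectedness) theorem makes $S$ irreducible. Hence $S$ is a smooth connected projective variety, compact as a closed subvariety of $\mathcal{Z}^*$.

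Finally I would bring in the monodromy input. Because $n\ge3$, $\mathcal{A}_g[n]$ is a fine moduli space carrying a universal principally polarized abelian scheme; restricting it along $S\subseteq\mathcal{Z}\subseteq\mathcal{A}_g[n]$ gives the universal family $f\colon\mathscr{A}_S\to S$, whose associated $\Q$-variation of Hodge structures is $\mathbb{W}\coloneqq(\mathbb{V}\otimes\Q)|_S$, with fibers $\mathbb{W}_s=H^1(A_s,\Q)$. As $S$ is a smooth connected projective variety it is in particular a connected, compactifiable algebraic variety, so the discussion preceding the proposition applies verbatim: the Theorem of the Fixed Part together with semisimplicity of the category of polarizable $\Q$-variations of Hodge structures on $S$ yields a decomposition $\mathbb{W}=\mathbb{W}^\Phi\oplus\mathbb{W}_{\mathrm{var}}$ — this $\mathbb{W}_{\mathrm{var}}$ being the $\mathbb{V}_{\mathrm{var}}$ of the statement — and André's theorem \eqref{eq: andre} then gives, for $s\in S$ very general, $T(\mathscr{A}_S)\triangleleft\mathcal{D}MT((\mathbb{V}_{\mathrm{var}})_s)$, which is what is claimed.

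The only real obstacle is the geometric step: one must arrange a single general complete intersection that is simultaneously smooth, irreducible (which is exactly where $d\ge2$ is needed), and disjoint from the boundary $\partial$, from $\overline{\mathcal{Y}}$, and from $\mathrm{Sing}(\mathcal{Z}^*)$ — i.e.\ the dimension count above has to be run for all of these conditions at once. Once $S$ has been constructed, the appeal to the Theorem of the Fixed Part and to André's theorem is entirely formal.
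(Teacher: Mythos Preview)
Your geometric construction of $S$ is more careful than the paper's and is fine. The gap is in the monodromy step, and it is exactly the point that makes the proposition useful in the applications.

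In the statement, $\mathbb{V}$ is the variation on $\mathcal{Z}$, so $\mathbb{V}_{\mathrm{var}}$ denotes the varying part of $\mathbb{V}$ with respect to the monodromy $\Phi$ of $\mathcal{Z}$, and $(\mathbb{V}_{\mathrm{var}})_s$ is its fiber at $s\in S\subset\mathcal{Z}$. What you actually prove, by applying Andr\'e's theorem to $\mathbb{W}=\mathbb{V}|_S$, is that $T(\mathscr{A}_S)\triangleleft\mathcal{D}MT((\mathbb{W}_{\mathrm{var}})_s)$, where $\mathbb{W}_{\mathrm{var}}$ is the varying part of $\mathbb{W}$ with respect to the monodromy $\Psi$ of $S$. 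Your parenthetical ``this $\mathbb{W}_{\mathrm{var}}$ being the $\mathbb{V}_{\mathrm{var}}$ of the statement'' is either a misreading of the statement or an unjustified assertion: a priori $\mathbb{W}^\Psi$ could strictly contain $(\mathbb{V}^\Phi)|_S$, so $\mathbb{W}_{\mathrm{var}}$ could be strictly smaller than $(\mathbb{V}_{\mathrm{var}})|_S$, and then $\mathcal{D}MT((\mathbb{W}_{\mathrm{var}})_s)$ would not be the group in the conclusion. This matters because in every application (Theorems~\ref{main theorem} and~\ref{thm unitary}) the derived Mumford--Tate group is computed from the structure of $\mathcal{Z}$, not of $S$.

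The paper's proof addresses exactly this. It first observes that, since $S$ is a \emph{general} complete intersection in $\mathcal{Z}$, a very general point $s$ of $S$ is also very general in $\mathcal{Z}$; it then argues, invoking the Theorem of the Fixed Part, that $\mathbb{V}^\Phi|_S=\mathbb{W}^\Psi$ and hence $(\mathbb{V}_{\mathrm{var}})|_S=\mathbb{W}_{\mathrm{var}}$, so that $\mathcal{D}MT((\mathbb{W}_{\mathrm{var}})_s)=\mathcal{D}MT((\mathbb{V}_{\mathrm{var}})_s)$. (An alternative way to see the equality of fixed parts: since $\dim S=d-1\ge1$, for a general complete intersection the map $\pi_1(S)\to\pi_1(\mathcal{Z})$ is surjective by a Lefschetz-type argument, so $\Psi$ and $\Phi$ have the same image and hence the same invariants.) Once this identification is in place, your appeal to Andr\'e's theorem finishes the proof.
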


\begin{proof}
Since the boundary $\mathcal{Z}^*\backslash \mathcal{Z}$ of $\mathcal{Z}^*$ has codimension at least $d$ in $\mathcal{Z}^*$, it follows that a $(d-1)$-dimensional  subvariety $S$ of $\mathcal{Z}^*$ obtained as a 
 general complete intersection of ample divisors will avoid  $\mathcal{Z}^*\backslash \mathcal{Z}$ and thus lie entirely in $Z$. Moreover, since $\mathcal{Y}\subset \mathcal{Z}$ has codimension at least $d$ in $\mathcal{Z}$, we also have $\mathcal{Y}\cap S=\emptyset$. Thus indeed such an $S$ is a $(d-1)$-dimensional compact subvariety of $\mathcal{Z}\backslash \mathcal{Y}$.
 
Consider the universal family $f\colon\mathscr{A}\rightarrow S$ of abelian varieties over $S\subset \mathcal{A}_g[n]$. Let $\mathbb{W}=\mathbb{V}|_S$ denote the induced variation of $\Z$-Hodge structures on $S$ given by $s\mapsto H^1(A_s, \Z)$.  Leting $\Psi$ denote the monodromy representation attached to $\mathbb{W}$ at a very general point $s\in S$, we have $\mathbb{W}=\mathbb{W}^\Psi\oplus \mathbb{W}_{\mathrm{var}}$ and, by the 
 theorem of Andr\'e \cite[Theorem 1]{andre} stated above in \eqref{eq: andre}, the connected monodromy group $T(\mathscr{A})$ is a normal subgroup of $\mathcal{D}MT((\mathbb{W}_{\mathrm{var}})_s)$.
  
 However, by construction, since the point $s$ is very general in $S$ and $S$ is obtained as a general complete intersection in $\mathcal{Z}$, we have that $s$ is also very general in $\mathcal{Z}$. Moreover if $\Phi$ is the monodromy representation of the variation of $\mathbb{V}$ at the point $s$, the Theorem of the Fixed Part (see \cite[Theorem 4.1.1]{deligneii}) implies that $\mathbb{V}^\Phi|_S=\mathbb{W}^{\Psi}$ and $\mathbb{V}_{\mathrm{var}}|_S=\mathbb{W}_{\mathrm{var}}$. Therefore $\mathcal{D}MT((\mathbb{W}_{\mathrm{var}})_s)=\mathcal{D}MT((\mathbb{V}_{\mathrm{var}})_s)$ and so $T(\mathscr{A})$ is a normal subgroup of $\mathcal{D}MT((\mathbb{V}_{\mathrm{var}})_s)$.
 
 \end{proof}


\section{Complete Families of Abelian Varieties with Fixed Decomposition Type}\label{section main}
With the tools described in Sections \ref{section 1}--\ref{section MT}  in place, we are now ready to present the proof of Theorem \ref{main theorem}, which we restate here for convenience.

\begin{thm}
Consider any fixed product of pairwise non-isogenous abelian varieties $A_{c,1}\times \cdots \times A_{c,r}\times A_{v,1}\times \cdots \times A_{v,s}$ such that each $A_{v,j}$ is general in the moduli space $\mathcal{A}_{g_{v,j}}$ and all $g_{v,j}\ge 2$. Then for any positive integer $d\le \min_j g_{v,j}-1$, there exist infinitely many complete families  $f\colon \mathscr{A}\rightarrow B$ of maximal variation of $g$-dimensional principally polarized abelian varieties with the properties:
\begin{enumerate}
\item The base $B$ is a smooth projective variety of dimension $d$
\item Every fiber of $f$ is indecomposable.
\item If $A$ is a very general fiber of $f$, then $A$ is isogenous to the product
\[A_{c,1}\times \cdots \times A_{c,r}\times A_{v,1}\times \cdots \times A_{v,s}.\]
\item The connected monodromy group of $f\colon \mathscr{A}\rightarrow B$ is 
\[T(\mathscr{A})=\prod_{j=1}^{s}Sp_\Q(V_{v,j}),\]
where $V_{v,j}=H^1(A_{v,j},\Q)$ for each $1\le j\le s$.

\end{enumerate}
\end{thm}

\begin{proof}
Let us suppose without loss of generality that $g_{c,1}\le \cdots \le g_{c,r}$ and $g_{v,1}\le \cdots \le g_{v,s}$. 
For some fixed $n\ge 3$, consider the product of moduli spaces
\[\mathcal{Z}=\mathcal{A}_{g_{c,1}}[n]\times \cdots \times \mathcal{A}_{g_{c,r}}[n]\times \mathcal{A}_{g_{v,1}}[n]\times \cdots \times \mathcal{A}_{g_{v,s}}[n].\]

Let $\alpha$ be any general element of $Sp(2g,\Q)$. Then by Corollary \ref{main cor}, we have 
\begin{equation}\label{eq: Z}\alpha \mathcal{Z}\cap \Adec \subset \Amdec.\end{equation}

Fixing a level $n$ structure on the chosen abelian varieties $A_{c,1}\times \cdots \times A_{c,r}$, consider the subvariety of $\mathcal{Z}$ given by
\[\mathcal{Y}=\{A_{c,1}\}\times \cdots \times \{A_{c,r}\}\times \mathcal{A}_{g_{v,1}}[n]\times \cdots \times \mathcal{A}_{g_{v,s}}[n].\]
It then follows from \eqref{eq: Z} that 
\[\alpha \mathcal{Y}\cap \Adec \subset \Amdec.\]

Hence if  $P\in \mathcal{Y}$ satisfies $\alpha P\in  \Adec$, then $\alpha P\in  \Amdec$ and so $P\in \Amdec$. Moreover, it follows from Lemma \ref{lem dim2}, that 
the subvariety $\mathcal{Y}\cap \Amdec$ of $\mathcal{Y}$ has codimension in $\mathcal{Y}$
\begin{equation}\label{eq: minimum}\min(2g_{v,1}-2,\frac{1}{2}g_{c,1}(2g_{v,1}+1-g_{c,1}), \frac{1}{2}g_{c,r}(2g_{v,1}+1-g_{c,r})).\end{equation}
Note that for any $1\le j\le s$ and any indeterminate integer $x$, the value
 $ \frac{1}{2}x(2g_{v,j}+1-x)$ achieves its minimum when $x=1$ or $x=2g_{v,j}$ in which case this minimum value is $g_{v,j}$. Since $g_{v,j}\ge 2$ and so we also know $2g_{v,j}-2\ge g_{v,j}$, It follows that the value of the expression in \eqref{eq: minimum} is at least $g_{v,1}$. 

Moreover, recall from Example \ref{QBB} that for any $h\ge 1$ the Satake--Baily--Borel compactification $\mathcal{A}_h[n]^*$ of $\mathcal{A}_h[n]$ has boundary $\mathcal{A}_h[n]^*\backslash\mathcal{A}_h[n]$ of codimension $h$ in $\mathcal{A}_h[n]^*$. Therefore, the boundary of the Satake--Baily--Borel compactification $\mathcal{A}_{g_{v,1}}[n]^*\times \cdots \times \mathcal{A}_{g_{v,s}}[n]^*$ of $\mathcal{A}_{g_{v,1}}[n]\times \cdots \times \mathcal{A}_{g_{v,s}}[n]$ has codimension $g_{v,1}$ in $\mathcal{A}_{g_{v,1}}[n]^*\times \cdots \times \mathcal{A}_{g_{v,s}}[n]^*$. Therefore, letting
\[\mathcal{Y}^*=\{A_{c,1}\}\times \cdots \times \{A_{c,r}\}\times \mathcal{A}_{g_{v,1}}[n]^*\times \cdots \times \mathcal{A}_{g_{v,s}}[n]^*,\]
it follows that $\mathcal{Y}^*$ is a compactification of $\mathcal{Y}$ such that $\mathcal{Y}^*\backslash \mathcal{Y}$ has codimension $g_{v,1}$ in $\mathcal{Y}^*$. 

Thus by Proposition \ref{bb prop}, we have that for any $d\le g_{v,1}-1$, a general complete intersection of ample divisors on $\mathcal{Y}^*$ yields a $d$-dimensional smooth projective subvariety $X$ of $\mathcal{Y}\backslash (\mathcal{Y}\cap \Amdec)$. Moreover since the abelian varieties $A_{v_j}$ are chosen to be general in $\mathcal{A}_{g_{v,j}}[n]$ (after imposing some level $n$ structure) such a general complete intersection can be chosen to have very general fiber 
\[A_{c,1}\times \cdots \times A_{c,r}\times A_{v,1}\times \cdots \times A_{v,s}.\]

Therefore we have produced a $d$-dimensional smooth projective variety $X$ such that a very general fiber $A$ of $\alpha X$ satisfies $A\sim A_{c,1}\times \cdots \times A_{c,r}\times A_{v,1}\times \cdots \times A_{v,s}$ and such that for any general $\alpha \in Sp(2g,\Q)$ we have $\alpha X \cap \Adec=\emptyset$. 
After a possible base change, this $\alpha X$ yields a family $\mathscr{A}\rightarrow B$ of maximal variation of indecomposable $g$-dimensional abelian varieties. It just remains to compute the connected monodromy group $T(\mathscr{A})$ of this family. 
 
Since isogenous abelian varieties have isomorphic rational cohomology,  Proposition \ref{bb prop} implies that  the connected monodromy group $T(\mathscr{A})$ is a normal subgroup of 
\[\mathcal{D}MT\left(\bigoplus_{j=1}^s V_{v,j}\right),\]
where $V_{v,j}=H^1(A_{v,j},\Q)$. 

Since each $A_{v,j}$ is general in $ \mathcal{A}_{g_{v,j}}[n]$ and so in particular $\End_\Q(A_{v,j})=\Q$ for all $j$, we have (see \cite[Theorem 3.2]{MZ}, \cite{hazama})
\[\mathcal{D}MT\left(\bigoplus_{j=1}^s V_{v,j}\right)=\prod_{j=1}^s \mathcal{D}MT(V_{v,j}).\]

Moreover, since each $A_{v,j}$ is general in $ \mathcal{A}_{g_{v,j}}[n]$, it follows that $ \mathcal{D}MT(V_{v,j})=Sp_\Q(V_{v,j})$ for every $1\le j\le s$ (see for instance \cite[Prop. 17.4.2]{BL}). Hence
\[\mathcal{D}MT\left(\bigoplus_{j=1}^s V_{v,j}\right)=\prod_{j=1}^sSp_\Q(V_{v,j}).\]

However by construction of $X$ as a general complete intersection in $\mathcal{Y}^*$, we know that the monodromy representation of $\pi_1(X)$ acts non-trivially on all the $V_{v,j}$ for $1\le j\le s$. Since  each $Sp_\Q(V_{v,j})$ is simple as a $\Q$-algebraic group, it follows that
\[T(\mathscr{A})=\prod_{j=1}^sSp_\Q(V_{v,j}).\]
\end{proof}

As discussed in Section \ref{intro} one of the motivations for the above theorem comes from the question of trying to understand 
 groups arising as connected monodromy groups of fibered projective varieties in the vein of  for instance \cite{arapura}.  The above theorem thus yields the following immediate consequence:

\begin{cor}
For any positive integer $d\le \min_i g_i-1$, where $g_i\ge 2$ for all $i$, and any $g'\ge \sum_{i=1}^sg_i$, the group  
\[T=\prod_{j=1}^sSp(2g_i,\Q)\]
arises as the connected monodromy group of a $d$-dimensional family of maximal variation of indecomposable $g'$-dimensional abelian varieties over a smooth projective base.
\end{cor}


\section{Applications to Kodaira Fibrations}\label{section kodaira}

As also discussed in Section \ref{intro}, another motivation for the above results comes from trying to understand Kodaira fibrations and their possible monodromy groups. 
Recall from Section \ref{intro kod section} that a Kodaira fibration is a non-isotrivial fibration $f\colon S\rightarrow B$ of a smooth projective surface to a smooth algebraic curve $B$ such that all fibers $F$ of $f$ are smooth algebraic curves. Applying Theorem \ref{main theorem} in the case $g=3$ recovers a result originally obtained in \cite{FKod}:

\begin{cor}\label{cor kod reproof}
For any choice of elliptic curve $E$, there exist infinitely many Kodaira fibrations $f\colon S\rightarrow B$ with fiber genus $3$ such that the cohomology of a very general fiber $F$ decomposes as $H^1(F,\Q)=H^1(E,\Q)\oplus W$ and the connected monodromy group of $S$ is $Sp_\Q(W)\cong Sp(4,\Q)$. 
\end{cor}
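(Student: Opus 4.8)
The plan is to specialize Theorem \ref{main theorem} to $g=3$ and then use the genus-$3$ Torelli theorem to convert the resulting family of abelian threefolds into a family of smooth genus-$3$ curves.

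\emph{Step 1: apply Theorem \ref{main theorem}.} First I would take $r=s=1$, with $A_{c,1}=E$ and $A_{v,1}=A$ a general abelian surface — so that $A$ is simple with $\End_\Q(A)=\Q$ and is not isogenous to $E$ — so that $g=1+2=3$. Since $\min_j g_{v,j}-1=1$, the value $d=1$ is admissible, and Theorem \ref{main theorem} yields infinitely many complete families $f'\colon\mathscr{A}\to B$ of principally polarized abelian threefolds over a smooth projective curve $B$ such that every fiber is indecomposable, a very general fiber $A_b$ satisfies $A_b\sim E\times A$, and $T(\mathscr{A})=Sp_\Q(H^1(A,\Q))\cong Sp(4,\Q)$.

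\emph{Step 2: pass to curves.} The crucial input is the classical fact that in dimension $3$ every indecomposable principally polarized abelian variety is the Jacobian of a smooth (possibly hyperelliptic) genus-$3$ curve; equivalently, the Torelli locus in $\mathcal{A}_3[n]$ is exactly $\mathcal{A}_3[n]\setminus\mathcal{A}_3^{\dec}[n]$. Recall that in the proof of Theorem \ref{main theorem} the family $\mathscr{A}\to B$ arises as a Hecke translate $\alpha X$ with $\alpha X\cap\mathcal{A}_3^{\dec}[n]=\emptyset$; hence the classifying map $B\to\mathcal{A}_3[n]$ lands in the Torelli locus, factors via the Torelli embedding through the fine moduli space $\mathcal{M}_3[n]$ of genus-$3$ curves with level-$n$ structure, and pulling back the universal curve produces a family $\pi\colon S\to B$. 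Since $\pi$ is a smooth proper morphism onto the smooth projective curve $B$ and $S$ is quasi-projective (as the pullback of the quasi-projective universal curve), $S$ is a smooth projective surface all of whose fibers over $B$ are smooth genus-$3$ curves. Non-isotriviality of $\pi$ follows because $T(\mathscr{A})\cong Sp(4,\Q)$ is positive-dimensional, so the monodromy on the $H^1$ of a fiber has infinite image; hence $\pi\colon S\to B$ is a Kodaira fibration of fiber genus $3$.

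\emph{Step 3: read off cohomology and monodromy.} For a very general fiber $F$ of $\pi$ the canonical isomorphism $H^1(F,\Q)\cong H^1(\Jac F,\Q)=H^1(A_b,\Q)$ with $A_b\sim E\times A$ gives $H^1(F,\Q)=H^1(E,\Q)\oplus W$, where $W\coloneqq H^1(A,\Q)$, and identifies the monodromy representation of $\pi$ with that of $\mathscr{A}$; thus the connected monodromy group of $f\colon S\to B$ equals $T(\mathscr{A})=Sp_\Q(W)\cong Sp(4,\Q)$. The infinitely many families furnished by Theorem \ref{main theorem} yield infinitely many such Kodaira fibrations. The step I expect to require the most care is Step 2 — specifically, pinning down that the abelian-variety family of Theorem \ref{main theorem} is literally the relative Jacobian of a smooth family of curves with smooth total space. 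Everything hinges on the coincidence, special to genus $3$, that the Jacobian locus is the full complement of the decomposable locus, together with the usual bookkeeping of level structures needed to work on fine moduli spaces; once that is in hand, verifying that $S$ is a Kodaira fibration with the asserted invariants is routine.
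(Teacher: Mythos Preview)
Your Step 1 and Step 3 match the paper exactly, and you correctly flag Step 2 as the delicate point. However, there is a genuine gap in Step 2 that the paper has to work around and that your argument does not address.

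You write that the classifying map $B\to\mathcal{A}_3[n]$ ``factors via the Torelli embedding through the fine moduli space $\mathcal{M}_3[n]$''. The Torelli map $\mathcal{M}_3[n]\to\mathcal{A}_3[n]$ is \emph{not} an embedding: it is ramified precisely along the hyperelliptic locus $\mathcal{H}_3[n]$, which has codimension $1$ in $\mathcal{M}_3[n]$. So while it is true that every fiber of $\mathscr{A}\to B$ is a Jacobian, it does not follow that the map $B\to\mathcal{A}_3[n]$ lifts to $\mathcal{M}_3[n]$, and hence you cannot simply pull back the universal curve. The hyperelliptic locus is affine and of codimension $1$, so the complete curve $B$ (obtained as a general complete intersection) will typically meet its image in finitely many points $P_1,\dots,P_k$, and over a neighborhood of each $P_i$ there is no family of curves inducing the given family of Jacobians.

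The paper's fix is exactly what the ramification picture suggests: locally at a hyperelliptic point the Torelli map looks like a double cover branched along $\mathcal{H}_3[n]$, so one replaces $B$ by a double cover $C'\to B$ branched over a set containing $P_1,\dots,P_k$. Over $C'$ the family of Jacobians does lift to a family of smooth genus-$3$ curves, and this $C'$ is the base of the Kodaira fibration. Your proposal becomes correct once this extra step is inserted; without it, Step 2 fails as written.
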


\begin{proof}
Let $g=3$, $h_c=1$, and $h_v=2$, with $\lambda_c=(1)$ and $\lambda_v=(12)$. By choosing $A_{c,1}=E$ and letting $d=1$, 
Theorem \ref{main theorem} yields a $1$-dimensional family of indecomposable abelian threefolds over a smooth algebraic curve $C$ with the given monodromy group and whose fibers have the given cohomology decomposition. To show that this curve $C$ actually parametrizes a family of curves, note first of all that all indecomposable abelian threefolds lie in the image of Torelli map $\mathcal{M}_3[n]\rightarrow \mathcal{A}_3[n]$. The slight subtlety arises from the fact that this map is ramified exactly at the hyperelliptic locus, which has codimension $1$ in $\mathcal{M}_3[n]$ and is affine. Since our complete curve $C$ is obtained as a general complete intersection, although  $C$ will not avoid the hyperelliptic locus, we can ensure that $C$ intersects the hyperelliptic locus at finitely many general points $P_1,\ldots, P_k$. Hence we do have a family of curves over $C-\{P_1,\ldots, P_k\}$. As described for instance in \cite[Section 1.2.1]{catanese}, taking a double cover $\pi\colon C'\rightarrow C$ branched over a set of points that includes $P_1,\ldots, P_k$ yields a family of genus $3$ curves with the desired property over the whole base $C'$, since the Kuranishi family of the each of the curves determined by the $P_i$ has a map to $\mathcal{M}_3[n]$ which is a double covering ramified over the hyperelliptic locus. 
\end{proof}

Additionally, although it doesn't follow directly, we may use the ideas of the proof of Theorem \ref{main theorem} 
to produce the following new construction of Kodaira fibrations with known connected monodromy group.
\begin{cor}\label{kod cor}
For any choice of elliptic curve $E$, there exist infinitely many Kodaira fibrations $f\colon S\rightarrow B$ with fiber genus $4$ such that the cohomology of a very general fiber $F$ decomposes as $H^1(F,\Q)=H^1(E,\Q)\oplus W$ and the connected monodromy group of $S$ is $Sp_\Q(W)\cong Sp(6,\Q)$. 
\end{cor}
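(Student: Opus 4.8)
The plan is to rerun the construction behind Theorem \ref{main theorem} in the case $g=4$, with fixed part the elliptic curve $E$ and varying part a single general abelian threefold, but to carry it out \emph{inside the Torelli locus} $\mathcal{J}_4[n]\subset\mathcal{A}_4[n]$ rather than in all of $\mathcal{A}_4[n]$. The point is that $\dim\mathcal{M}_4=9$ while $\dim\mathcal{A}_4=10$, so $\mathcal{J}_4[n]$ is a divisor; hence, unlike the genus-$3$ situation of Corollary \ref{cor kod reproof}, where every indecomposable abelian threefold is a Jacobian, the indecomposable abelian fourfolds produced by Theorem \ref{main theorem} are typically not Jacobians, and the extra work is to land a complete family in $\mathcal{J}_4[n]$.

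First I would isolate the relevant locus. Fixing $n\ge3$ and a level structure on $E$, and as in Theorem \ref{main theorem} consider $\mathcal{Y}=\{E\}\times\mathcal{A}_3[n]\subset\mathcal{A}_4[n]$ of dimension $6$; for a general $\alpha\in Sp(8,\Q)$ the Hecke translate $\alpha\mathcal{Y}$ satisfies $\alpha\mathcal{Y}\cap\Adec\subset\Amdec$ by Corollary \ref{main cor}. Set $\mathcal{W}=\alpha\mathcal{Y}\cap\mathcal{J}_4[n]$. Then $\mathcal{W}$ consists of Jacobians $J(C)$ that are polarized-isogenous to $E\times P$ for an abelian threefold $P$; since such curves exist --- a genus-$4$ curve admitting a degree-two map to an elliptic curve $E'\sim E$ has $J(C)\sim E'\times\mathrm{Prym}(C/E')$, and these form a positive-dimensional family --- the intersection $\mathcal{W}$ is nonempty, of the expected dimension $6+9-10=5$, and one checks that a very general point of $\mathcal{W}$ has $P$ Hodge-generic, i.e. $Hg(W)=Sp_\Q(W)$ for $W=H^1(P,\Q)$. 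Moreover $\mathcal{W}\cap\Amdec$ has codimension at least $2$ in $\mathcal{W}$ by the computations of Lemma \ref{lem dim2} applied with a one-dimensional fixed part.

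Next I would build the base curve. Choose a compactification $\mathcal{W}^*$ of $\mathcal{W}$ adapted to the Satake--Baily--Borel compactification $\mathcal{A}_3[n]^*$ used in Theorem \ref{main theorem}. The crucial feature, special to genus $4$, is that inside the Torelli locus the locus of \emph{singular stable curves} has codimension $\ge2$ --- the boundary divisors $\delta_1,\delta_2$ of $\overline{\mathcal{M}}_4$ map to $\mathcal{A}_4$ by forgetting the gluing node, so their images are only $7$-dimensional --- and the hyperelliptic locus, being of dimension $2g-1=7$, has codimension $2$ in the $9$-dimensional Torelli locus. Combining this with the bound on $\mathcal{W}\cap\Amdec$ and with Proposition \ref{bb prop}, a general complete intersection of ample divisors on $\mathcal{W}^*$ yields a smooth projective \emph{curve} $B$ lying in $\mathcal{J}_4[n]$, avoiding $\Amdec$, and whose very general point is a Jacobian isogenous to $E\times P$ with $P$ Hodge-generic. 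Since the Torelli map is an immersion off the hyperelliptic locus, $B$ lifts --- possibly after a base change as in the proof of Corollary \ref{cor kod reproof} to handle the ramification of the Torelli map --- to a family $f\colon S\to B$ of smooth genus-$4$ curves, non-isotrivial because its monodromy is nontrivial, hence a Kodaira fibration. Finally, exactly as in the proof of Theorem \ref{main theorem}, André's theorem (see \eqref{eq: andre}) gives $T(\mathscr{A})\triangleleft\mathcal{D}MT(W)$; Hodge-genericity of $P$ gives $\mathcal{D}MT(W)=Sp_\Q(W)\cong Sp(6,\Q)$, which is $\Q$-simple, so $T(\mathscr{A})=Sp_\Q(W)$. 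Letting $\alpha$ (equivalently the polarized isogeny class) vary produces infinitely many such fibrations.

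The main obstacle is the codimension bookkeeping on $\mathcal{W}^*$: one must check that \emph{every} boundary stratum of the chosen compactification --- the contributions coming from the boundary of $\mathcal{A}_3[n]^*$, the degenerations of the Jacobians in $\mathcal{W}$ to products or to generalized Jacobians of singular curves, and the hyperelliptic locus --- meets $\mathcal{W}^*$ in codimension at least $2$, so that a general complete-intersection curve misses all of them and $f\colon S\to B$ genuinely has no singular fibers. This is precisely the step where the small codimension of the genus-$4$ Torelli locus is exploited, and where the analysis must be carried out by hand, since $\mathcal{W}$ is not of Shimura type; I expect it to be the technical heart of the argument.
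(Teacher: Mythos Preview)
Your strategy coincides with the paper's: intersect the Hecke-translated $\alpha\mathcal{Y}=\alpha(\{E\}\times\mathcal{A}_3[n])$ with the Torelli locus to obtain a $5$-dimensional $\mathcal{W}$, show that $\mathcal{W}\cap\Adec$ and the boundary of a suitable compactification $\mathcal{W}^*$ both have codimension $\ge 2$, take a general complete-intersection curve, and invoke Andr\'e's theorem together with the $\Q$-simplicity of $Sp(6)$.

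Two minor differences in execution are worth noting. First, your codimension bound on $\mathcal{W}\cap\Amdec$ is obtained in the paper not by applying Lemma~\ref{lem dim2} to $\mathcal{W}$ directly (it does not apply there) but by the chain: Lemma~\ref{lem dim2} gives $\mathrm{codim}_{\mathcal{Y}}(\mathcal{Y}\cap\Amdec)=3$, hence $\mathrm{codim}_{\alpha\mathcal{Y}}(\alpha\mathcal{Y}\cap\Adec)\ge 3$ via Corollary~\ref{main cor}, and then intersecting with the codimension-$1$ Torelli divisor drops this by at most one. Second, for the compactification the paper stays entirely on the abelian-variety side, taking $\mathcal{W}^*$ to be the closure inside $\alpha\mathcal{Y}^*=\alpha(\{E\}\times\mathcal{A}_3[n]^*)$ and reading off $\mathrm{codim}\ge 2$ from the codimension-$3$ boundary of the Satake--Baily--Borel compactification of $\mathcal{A}_3[n]$; your analysis via the boundary strata $\delta_1,\delta_2$ of $\overline{\mathcal{M}}_4$ is an alternative route to the same conclusion. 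Your explicit checks of nonemptiness of $\mathcal{W}$ (via bielliptic curves) and of Hodge-genericity of the varying threefold are points the paper passes over quickly, so your flagging them is appropriate.
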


\begin{proof}
Let $g=4$, $h_c=1$, and $h_v=3$, with $\lambda_c=(1)$ and $\lambda_v=(123)$. Consider, as in the proof of Theorem \ref{main theorem}, the subvarieties of $\mathcal{A}_4[n]$
\[\mathcal{Z}=\mathcal{A}_{1}[n]\times \mathcal{A}_{3}[n]\]
\[\mathcal{Y}=\{E\}\times \mathcal{A}_{3}[n].\]
We know from Lemma \ref{lem dim2}, that 
the subvariety $\mathcal{Y}\cap \Amdec$ of $\mathcal{Y}$ has codimension $3$ in $\mathcal{Y}$. We know by Corollary \ref{main cor} that for $\alpha \in Sp(8,\Q)$ general, 
we have $\alpha \mathcal{Z}\cap \Adec \subset \Amdec$ and hence $\alpha \mathcal{Y}\cap  \Adec \subset \Amdec$. Thus $\alpha \mathcal{Y}\cap  \Adec$ has codimension at least $3$ in $\alpha \mathcal{Y}$. 

Additionally, we know that the Torelli locus $\mathcal{T}_4$ of $\mathcal{A}_4[n]$, meaning the locus of Jacobians of curves, has codimension $1$ in $\mathcal{A}_4[n]$. Hence the subvariety  $\mathcal{W}=\alpha \mathcal{Y}\cap \mathcal{T}_4$ has codimension at most $1$ in $\alpha\mathcal{Y}$. Moreoever since  $\alpha \mathcal{Y}\cap  \Adec$ has codimension at least $3$ in $\alpha \mathcal{Y}$, it follows that $\mathcal{W}\cap \Adec$ has codimension at least $2$ in $\mathcal{W}$. 

Now consider the compactification $\mathcal{Y}^*=\{E\}\times  \mathcal{A}_{3}[n]^*$  obtained by taking the Satake--Baily--Borel compactification of $\mathcal{A}_3[n]$. Since the boundary  $\mathcal{A}_{3}[n]^*\backslash \mathcal{A}_3[n]$ has codimension $3$ in $\mathcal{A}_{3}[n]^*$ (see Example \ref{QBB}), it follows that the boundary $\mathcal{Y}^*\backslash \mathcal{Y}$ has codimension $3$ in $\mathcal{Y}^*$. Hence letting $\mathcal{W}^*$  be the compactification of $\mathcal{W}$ induced by the compactification $\mathcal{Y}^*$, we have that the boundary $\mathcal{W}^*\backslash \mathcal{W}$ has codimension at least $2$ in 
 $\mathcal{W}^*$. 
 
 Applying Proposition \ref{bb prop} to $\mathcal{W}$ and its subvariety $\mathcal{W}\cap \Adec$  we then obtain a smooth algebraic curve $C$ obtained as a general complete intersection such that $C$ parametrizes a family of indecomposable abelian fourfolds whose very general fiber  has cohomology with the given decomposition. Moreover, since for a general abelian threefold $A\in \mathcal{A}_3[n]$ with $W=H^1(A,\Q)$, we have $\mathcal{D}MT(W)=Sp_\Q(W)$ \cite[Prop 17.4.2]{BL}, which is simple as a $\Q$-algebraic group, Proposition \ref{bb prop} also implies that this family has connected monodromy group $Sp_\Q(W)\cong Sp(6,\Q)$.
 
 As in the proof of Corollary \ref{cor kod reproof}, it remains to argue that this curve $C$ actually parametrizes a complete family of curves. Identically to the proof in that case, 
 since the Torelli map $\mathcal{M}_4[n]\rightarrow \mathcal{A}_4[n]$ is an immersion away from the hyperelliptic locus $\mathcal{H}_4[n]$ which is affine and of codimension $2$ in $\mathcal{M}_4[n]$, we may ensure that if this complete curve $C$, obtained as a general complete intersection, intersects $\mathcal{H}_4[n]$, it intersects it in finitely many general points. Then after taking a double cover $C'\rightarrow C$ branched over a set containing these points, we can ensure that $C'$ really does parametrize a family of curves with the desired properties. 
 
 \end{proof}


\section{Other connected monodromy groups}\label{section unitary}
We remark here that the methods of Theorem \ref{main theorem} enable the construction of complete families of indecomposable abelian varieties with other kinds of connected monodromy groups. 
If $\mathcal{V}$ is the subvariety of $\mathcal{A}_g[n]$ parametrizing abelian varieties with the desired decomposition and Hodge-theoretic data, then $\mathcal{V}$ has a Baily-Borel compactification $\mathcal{V}^*$. Hence as long as the codimensions of both $\mathcal{V}\cap \Amdec$ in $\mathcal{V}$ and the boundary $\mathcal{V}^*\backslash \mathcal{V}$ in $\mathcal{V}^*$ are at least $2$, one can produce via Proposition \ref{bb prop} complete families of the desired type as complete intersections on $\mathcal{V}$. 

As an illustration of the above in practice, we detail below the construction of a complete family of indecomposable abelian varieties where the connected monodromy group  is a unitary group due to the fact that the general fiber of the family has endomorphisms by a fixed imaginary quadratic field.

\subsection{Imaginary quadratic multiplication and the Mumford-Tate group}\label{section MTunitary}
For a fixed imaginary quadratic field $L$ and positive integers $p,q$, recall the notation $\mathcal{Z}_L(p,q)$ introduced in Example \ref{unitaryBB} to denoting the $pq$-dimensional Shimura variety in $\mathcal{A}_{p+q}[n]$ parametrizing $(p+q)$-dimensional abelian varieties $A$ with  level $n$ structure such that $L\hookrightarrow \End_\Q(A)$ via the prescribed action given by the integers $(p,q)$. 

Let $V=H^1(A,\Q)$ for $A$ a general point of $\mathcal{Z}_L(p,q)$. Let $D_V\cong M_{p+q}(L^{\mathrm{op}})$ denote the centralizer of $L$ in $\End_\Q(V)$, let $^{-}$ denote the involution on $D_V$ induced by complex conjugation on $L$.  Then define
\begin{align*}
U(D_V,^{-})&\coloneqq\{d\in D_V^*\mid \overline{d}=d^{-1}\}\\
SU(D_V,^{-})&\coloneqq \ker \left(\mathrm{Norm}_{D_V/\Q}\colon U(D_V,^{-})\rightarrow \mathbb{G}_{m,\Q}\right).
\end{align*}
The groups $U(D_V,^{-})$ and $SU(D_V,^{-})$ are $\Q$-forms of the groups $U(p,q)$ and $SU(p,q)$ respectively (see for instance \cite{boi}).

\begin{lem}\label{lem mtunitary}
For a fixed imaginary quadratic field $L$ and positive integers $p,q$, let $A$ be a general point of the Shimura variety $\mathcal{Z}_L(p,q)$ and $V=H^1(A,\Q)$. Then $\mathcal{D}MT(V)=SU(D_V,^{-})$. 
\end{lem}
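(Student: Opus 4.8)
The plan is to compute $\mathcal{D}MT(V)$ by combining the general structure theory of Mumford--Tate groups of abelian varieties with endomorphisms by a CM field with a dimension count that pins down the group exactly. Write $\End_\Q(A)=L$, so $A$ is of the second kind (the endomorphism algebra is a field with nontrivial Rosati involution), and the standard description of the Mumford--Tate group (see Mumford, or \cite{moonenmt}) says that $MT(V)$ is contained in the group of $L$-linear symplectic similitudes of $(V,\psi)$, where $\psi$ is the polarization form. Since the centralizer of $L$ in $\End_\Q(V)$ is $D_V$, and the symplectic form together with the Rosati involution cuts out exactly a unitary-type group, we get $MT(V)\subseteq GU(D_V,{}^{-})$ and hence $\mathcal{D}MT(V)\subseteq SU(D_V,{}^{-})$, using that $SU(D_V,{}^{-})$ is the derived group of $GU(D_V,{}^{-})$ (it is semisimple, in fact simple as a $\Q$-group, being a $\Q$-form of $SU(p,q)$).

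For the reverse inclusion I would argue via dimensions, exploiting that $A$ is chosen \emph{general} in $\mathcal{Z}_L(p,q)[n]$. The key point is that $\mathcal{Z}_L(p,q)[n]$ is itself a Shimura variety whose associated $\Q$-group is $SU(D_V,{}^{-})$ — indeed Example \ref{unitaryBB} presents it as $\ker(SU((p,q),\mathcal{O}_L)\to SU((p,q),\mathcal{O}_L/n\mathcal{O}_L))\backslash \mathcal{D}(p,q)$, and $\dim \mathcal{D}(p,q)=pq$ matches the dimension of the symmetric space of $SU(p,q)$. By the general theory of Shimura varieties, the generic Mumford--Tate group of the variation of Hodge structures over a connected Shimura variety $\Gamma\backslash X^+$ attached to $(G,X)$ is all of $G$ (more precisely its image in $GL(V)$), because the Hodge locus where $MT$ drops is a countable union of proper special subvarieties. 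Hence for very general $A\in\mathcal{Z}_L(p,q)[n]$ the special Mumford--Tate group $SMT(V)$ is the full $\Q$-form $SU(D_V,{}^{-})$; since this group is semisimple it equals its own derived group, and one checks $\mathcal{D}MT(V)=SMT(V)$ here because $MT(V)/SMT(V)$ is a torus (the similitude/norm part), so the derived group of $MT(V)$ lands in $SMT(V)$ and, being all of $SMT(V)$ by the containment just established, equals it.

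The main obstacle — or at least the step requiring the most care — is making rigorous the claim that the \emph{general} point of $\mathcal{Z}_L(p,q)[n]$ has Mumford--Tate group as large as possible, i.e. equal to the generic one. One clean way is: (i) show the generic $MT$ over $\mathcal{Z}_L(p,q)[n]$ is contained in $GU(D_V,{}^{-})$ by the endomorphism constraint above; (ii) invoke that the monodromy of the universal family over the Shimura variety $\mathcal{Z}_L(p,q)[n]$ is Zariski-dense in (the derived group of) the defining group $SU(D_V,{}^{-})$ — this is standard for Shimura varieties of PEL type, and Deligne's "theorem of the fixed part" plus Andr\'e's result \cite{andre} force $\mathcal{D}MT$ of the general fiber to contain the connected monodromy group, which here is all of $SU(D_V,{}^{-})$; (iii) combine (i) and (ii) with the simplicity of $SU(D_V,{}^{-})$ as a $\Q$-algebraic group to conclude $\mathcal{D}MT(V)=SU(D_V,{}^{-})$ exactly. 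Alternatively one can cite the explicit computation of Mumford--Tate groups of abelian varieties with CM by an imaginary quadratic field in the literature (e.g. \cite{boi} or the references in \cite{MZ}), where the generic case of the $(p,q)$-action is worked out and gives precisely $SU(D_V,{}^{-})$; I expect the paper takes this more economical route, citing the PEL-type analysis rather than reproving density of monodromy.
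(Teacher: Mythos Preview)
Your proposal is correct, but it differs from the paper's argument in how the lower bound $SU(D_V,{}^{-})\subseteq \mathcal{D}MT(V)$ is obtained. You argue either via the standard fact that the generic Mumford--Tate group on a Shimura variety equals the defining group (Hodge loci form a countable union of proper special subvarieties), or via Zariski-density of monodromy combined with Andr\'e's theorem. The paper instead uses a normality argument, attributed to Totaro and Green--Griffiths--Kerr: having established $M\coloneqq MT(V)\subseteq H\coloneqq U(D_V,{}^{-})$ from the endomorphism constraint, one observes that $H(\Q)$ preserves exactly the data (the polarized $\Q$-Hodge structure, the $L$-action of type $(p,q)$, and the connected component $\mathcal{Z}_L(p,q)$) that determine the generic $M$, so $H(\Q)$ normalizes $M$; since $H$ is connected over $\Q$ this forces $M\triangleleft H$. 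Then $M\cap SU(D_V,{}^{-})$ is normal in the $\Q$-simple group $SU(D_V,{}^{-})$, and since $M$ is non-abelian (the general $A$ is not CM) one gets $SU(D_V,{}^{-})\subseteq M$, whence $\mathcal{D}M=SU(D_V,{}^{-})$.

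The trade-offs: the paper's argument is short and purely group-theoretic once the containment $M\subseteq H$ is in hand, avoiding any appeal to the geometry of Hodge loci or to monodromy. Your first route (Hodge loci) is equally valid and arguably more conceptual, making transparent why genericity of $A$ matters. Your second route via Andr\'e is correct but slightly awkward in this paper's context, since Andr\'e's theorem is being invoked elsewhere precisely to bound connected monodromy \emph{by} $\mathcal{D}MT$, so using monodromy to compute $\mathcal{D}MT$ runs the inference in the opposite direction. Your closing guess that the paper ``cites the PEL-type analysis'' is not what happens; the normality trick is the actual mechanism.
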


\begin{proof}
The argument here is inspired by similar arguments in \cite[pg 4110]{totaro} and \cite[Proposition VI. A.5]{domain}. The endomorphism algebra of any $\Q$-Hodge structure $W$ is always the commutant of the Mumford-Tate group  $MT(W)$ in $\End_\Q(W)$. Since Mumford-Tate groups are connected, it follows that $M\coloneqq MT(V)$ is contained in the connected component $H$ of the centralizer of $L$ in $Sp(V)$. Under the given hypotheses, we have $H=U(D_V,^{-})$. 

Now note that since $M$ is the Mumford-Tate group of a general point of $\mathcal{Z}_L(p,q)$, the group $M$ is determined solely by the data of the polarized $\Q$-Hodge structure $V=H^1(A,\Q)$ together with the embedding $L\hookrightarrow \End_\Q(A)$ via the choice of $(p,q)$ and the connected component $\mathcal{Z}_L(p,q)$. However by construction $H(\Q)$ preserves $V=H^1(A,\Q)$ together with its action by $L$ and so, because $H$ is connected, the group $H(\Q)$ also preserves $\mathcal{Z}_L(p,q)$. 
Hence, since $H$ is a connected group over the perfect field $\Q$, we have that  $H(\Q)$ normalizes the group $M$ \cite[Corollary 18.3]{borel}. So, since $M\subset H$, it follows that $M$ is in fact a normal subgroup of $H=U(D_V,^{-})$. 

Since the group $SU(D_V,^{-})\subset U(D_V,^{-})$ is $\Q$-simple, we thus have an inclusion of normal subgroups
\[SU(D_V,^{-})\triangleleft M \triangleleft U(D_V,^{-}).\]
However the derived subgroup $\mathcal{D}U(D_V,^{-})$ of the group $U(D_V,^{-})$ is exactly the group $SU(D_V,^{-})$. Therefore we must have $\mathcal{D}M=SU(D_V,^{-})$.
\end{proof}


\begin{thm}\label{thm unitary}
Fix pairwise non-isogenous elliptic curves $E_1, \ldots, E_r$ without CM as well as an imaginary quadratic field $L$, a choice of positive integers $p,q$ such that $p+q\ge4$, and an abelian variety $Z$ which is general in the moduli space $\mathcal{Z}_L(p,q)[n]$. 
Then for any positive integer
\[d\le \min(2p, p+q-2, 2q)-1\]
 there exist infinitely many complete families  $f\colon \mathscr{A}\rightarrow S$ of maximal variation of abelian varieties of dimension $g=p+q+r$ with the properties:
\begin{enumerate}
\item The base $S$ is a smooth projective variety of dimension $d$
\item Every fiber of $f$ is indecomposable.
\item If $A$ is a very general fiber of $f$, then $A$ is isogenous to the product
$E_1\times \cdots \times E_r\times Z,$ where $\End_\Q(Z)=L$ with prescribed action determined by $(p,q)$
\item Letting $V=H^1(Z,\Q)$, the connected monodromy group of $\mathscr{A}$ is $SU(D_V,^{-})$.
\end{enumerate}
\end{thm}

\begin{proof}
Consider the locus in $\mathcal{A}_g[n]$ given by
\[\mathcal{Y}=\{E_1\}\times \cdots \times \{E_r\}\times  \mathcal{Z}_L(p,q).\]
Let $\alpha$ be any general element of $Sp(2g,\Q)$. Then it follows from Corollary \ref{main cor} that $\alpha \mathcal{Y}\cap \Adec\subset \Amdec$. It follows from Lemma \ref{lem dim4} that  $\mathcal{Y}\cap \Amdec$ has codimension $\min(2p, p+q-2, 2q)$ in $\mathcal{Y}$. 

Additionally, as discussed in Example \ref{unitaryBB}, the Satake--Baily--Borel compactification 
$\mathcal{Z}_L(p,q)^*$ of $\mathcal{Z}_L(p,q)$ has boundary of codimension at least $p+q-1$. 
Thus
\[\mathcal{Y}^*=\{E_1\}\times \cdots \times \{E_r\}\times  \mathcal{Z}_L(p,q)^*\] is a compactification of $\mathcal{Y}$ such that $\mathcal{Y}^*\backslash \mathcal{Y}$ has codimension at least at least $p+q-1$ in $\mathcal{Y}^*$. Hence by Proposition \ref{bb prop}, it follows that for any $d\le \min(2p, p+q-2, 2q)-1$ a general complete intersection of ample divisors on $\mathcal{Y}^*$ yields a $d$-dimensional smooth compact subvariety $X$ of $\mathcal{Y}\backslash( \mathcal{Y}\cap \Amdec)$. Moreover we can ensure that this general complete intersection $X$ has the property that a very general fiber $A$ of the universal family $\mathscr{U}$ of $X$ will be of the form $A\sim E_1\times \cdots \times E_r\times Z,$ for  $Z$ the given general point of $\mathcal{Z}_L(p,q)$. 

By  Proposition \ref{bb prop},   the connected monodromy group $T(\mathscr{U})$  is then a normal subgroup of $\mathcal{D}MT(V)$, where $V=H^1(Z,\Q)$. Since by Lemma \ref{lem mtunitary}, we have $\mathcal{D}MT(V)=SU(D_V,^{-})$ and $SU(D_V,^{-})$ is a $\Q$-simple group, it follows that $T(\mathscr{U})=SU(D_V,^{-})$. 

Therefore for any general element $\alpha$ of $Sp(2g,\Q)$, we have that $\alpha X$ is a $d$-dimensional smooth algebraic subvariety of $\alpha \mathcal{Y}$ such that $\alpha X \cap  \Adec =\emptyset$. So indeed after possible base change, this $\alpha X$ yields a complete family of indecomposable $g$-dimensional abelian varieties with the desired properties. 
\end{proof}

Once again, in the direction of understanding 
 groups arising as connected monodromy groups of fibered projective varieties, the above theorem thus yields the following immediate consequence:

\begin{cor}\label{unitary cor}
Let $L$ be a fixed imaginary quadratic field. For any positive integers $p,q,$ and $g'$ such that $5\le p+q+1\le g'$ and any positive integer $d\le \min(2p, p+q-2, 2q)-1,$
the group $SU(D_V,^{-})$, where $V\cong H^1(A,\Q)$ for some simple abelian variety $A\in \mathcal{Z}_L(p,q)$, arises as the connected monodromy group of a $d$-dimensional complete family of maximal variation of $g'$-dimensional indecomposable abelian varieties over a smooth algebraic variety.
\end{cor}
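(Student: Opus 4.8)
The plan is to deduce Corollary~\ref{unitary cor} from Theorem~\ref{thm unitary} by the same padding trick used to obtain Corollary~\ref{main group cor} from Theorem~\ref{main theorem}. Given $p,q,g'$ with $5\le p+q+1\le g'$ and $d\le \min(2p,p+q-2,2q)-1$, I would set $r=g'-(p+q)\ge 1$ and pick $r$ pairwise non-isogenous elliptic curves $E_1,\dots,E_r$ without CM, together with a general abelian variety $Z\in\mathcal{Z}_L(p,q)[n]$. Then Theorem~\ref{thm unitary} applied to this data (note $g=p+q+r=g'$) produces infinitely many complete families $f\colon\mathscr{A}\to S$ of $g'$-dimensional indecomposable abelian varieties over a smooth projective base $S$ of dimension $d$, whose very general fiber is isogenous to $E_1\times\cdots\times E_r\times Z$ and whose connected monodromy group is $SU(D_V,{}^{-})$ with $V=H^1(Z,\Q)$. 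Since $Z$ is general in $\mathcal{Z}_L(p,q)[n]$ it is in particular simple, so $V\cong H^1(A,\Q)$ for a simple $A\in\mathcal{Z}_L(p,q)$, which is exactly the form of the group asserted in the corollary.

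The only point needing a word of care is the hypothesis translation: the corollary's constraint "$5\le p+q+1\le g'$" is precisely "$p+q\ge 4$ and $r=g'-(p+q)\ge 1$", so $r\ge 1$ and the fixed part $E_1\times\cdots\times E_r$ is non-empty, matching the setup of Theorem~\ref{thm unitary}; and the constraint $p+q\ge 4$ is exactly what Theorem~\ref{thm unitary} requires. I would also remark that the base $S$ can be taken smooth projective (hence in particular a smooth algebraic variety as stated), and that "indecomposable" here is property~(2) of Theorem~\ref{thm unitary}. The family $\mathscr{A}\to S$ of Theorem~\ref{thm unitary} has fiber dimension $g=p+q+r=g'$ by construction, so no further adjustment is needed.

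There is essentially no obstacle here: the corollary is a direct specialization/repackaging of the theorem, and the proof is two lines. If anything, the only thing to be careful about is making sure the arithmetic $g=p+q+r=g'$ and the inequality bookkeeping ($r\ge1\iff p+q+1\le g'$) are stated cleanly so the reader sees that all hypotheses of Theorem~\ref{thm unitary} are met. Concretely, the proof I would write is:

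\begin{proof}
Given $p,q,g'$ with $5\le p+q+1\le g'$, set $r=g'-(p+q)$; the hypothesis gives $p+q\ge 4$ and $r\ge 1$. Choose pairwise non-isogenous elliptic curves $E_1,\dots,E_r$ without CM and an abelian variety $Z$ general in $\mathcal{Z}_L(p,q)[n]$. Applying Theorem~\ref{thm unitary} to this data yields, for every $d\le \min(2p,p+q-2,2q)-1$, infinitely many complete families $f\colon\mathscr{A}\to S$ of abelian varieties of dimension $g=p+q+r=g'$ over a smooth projective variety $S$ of dimension $d$, each of whose fibers is indecomposable and whose connected monodromy group is $SU(D_V,{}^{-})$, where $V=H^1(Z,\Q)$. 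Since $Z$ is general in $\mathcal{Z}_L(p,q)[n]$ it is simple, so $V\cong H^1(A,\Q)$ for a simple abelian variety $A\in\mathcal{Z}_L(p,q)$, as claimed.
\end{proof}
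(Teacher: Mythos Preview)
Your proposal is correct and takes essentially the same approach as the paper: set $r=g'-(p+q)\ge 1$, choose $r$ pairwise non-isogenous non-CM elliptic curves, and apply Theorem~\ref{thm unitary}. The paper's proof is in fact even terser than yours, omitting the remarks about simplicity of $Z$ and the inequality bookkeeping.
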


\begin{proof}
Let $m=g'-(p+q)$. Note that by assumption $m\ge 1$. Hence let fix pairwise non-isogenous elliptic curves  $E_1, \ldots, E_m$ without CM. The result then follows from Theorem \ref{thm unitary}.
\end{proof}

\textbf{Acknowledgements.}The author would like to thank Giulia Sacc\`a and Nick Salter whose conversations inspired the writing down of this paper. 

The author
 gratefully acknowledges support of the National Science Foundation through award DMS-1803082.  Additionally, this material is based upon work supported by the National Science Foundation under Grant DMS-1440140 while the author was in residence at the Mathematical Sciences Research Institute in Berkeley, California during the Spring 2019 semester.

\normalsize{\bibliography{Kodaira.bib}}
\bibliographystyle{alpha}

\end{document}